\date{\today}
\newcommand{\bbN}{{\mathbb{N}}}
\newcommand{\bbR}{{\mathbb{R}}}
\newcommand{\bbC}{{\mathbb{C}}}
\newcommand{\cB}{{\mathcal B}}
\newcommand{\cH}{{\mathcal H}}
\newcommand{\cS}{{\mathcal S}}
\newcommand{\cX}{{\mathcal X}}
\newcommand{\dott}{\,\cdot\,}
\newcommand{\no}{\notag}
\newcommand{\lb}{\label}
\newcommand{\f}{\frac}
\newcommand{\ol}{\overline}
\newcommand{\wti}{\widetilde}
\newcommand{\dom}{\text{\rm{dom}}}
\newcommand{\bi}{\bibitem}
\newcommand{\hatt}{\widehat}
\newcommand{\beq}{\begin{equation}}
\newcommand{\eeq}{\end{equation}}
\newcommand{\ba}{\begin{align}}
\newcommand{\ea}{\end{align}}
\newcommand{\tr}{\text{\rm{tr}}}
\renewcommand{\Im}{\text{\rm Im}}
\renewcommand{\ln}{\text{\rm ln}}
\newcommand{\Om}{\Omega}
\newcommand{\dOm}{{\partial\Omega}}
\newcommand{\si}{\sigma}
\newcommand{\ga}{\gamma}
\newcommand{\eps}{\varepsilon}
\newcommand{\LOm}{L^2(\Om;d^nx)}
\newcommand{\LdOm}{L^2(\dOm;d^{n-1}\si)}
\allowdisplaybreaks \numberwithin{equation}{section}
\newtheorem{theorem}{Theorem}[section]
\newtheorem{lemma}[theorem]{Lemma}
\newtheorem{hypothesis}[theorem]{Hypothesis}
\theoremstyle{definition}
\newtheorem{remark}[theorem]{Remark}
\begin{document}

\title[Dirichlet-to-Neumann Maps and Infinite Determinants]{On 
Dirichlet-to-Neumann Maps, Nonlocal Interactions, and Some Applications to \\ 
Fredholm  Determinants}
\author[F.\ Gesztesy, M.\ Mitrea, and M.\ Zinchenko]{Fritz Gesztesy, Marius Mitrea, and Maxim Zinchenko}
\address{Department of Mathematics,
University of Missouri, Columbia, MO 65211, USA}
\email{gesztesyf@missouri.edu}
\urladdr{http://www.math.missouri.edu/personnel/faculty/gesztesyf.html}
\address{Department of Mathematics, University of
Missouri, Columbia, MO 65211, USA}
\email{mitream@missouri.edu}
\urladdr{http://www.math.missouri.edu/personnel/faculty/mitream.html}
\address{Department of Mathematics,
California Institute of Technology, Pasadena, CA 91125, USA}
\email{maxim@caltech.edu}
\urladdr{http://math.caltech.edu/$\sim$maxim}
\dedicatory{Dedicated with great pleasure to Willi Plessas on the occasion of his 60th birthday}
\thanks{Based upon work partially supported by the US National Science
Foundation under Grant Nos.\ DMS-0400639 and FRG-0456306.}
\thanks{{\it Few Body Systems} {\bf 47}, 49--64 (2010).}
\date{\today}
%\date{May, 2009.}
\subjclass[2000]{Primary: 47B10, 47G10, Secondary: 34B27, 34L40.}
\keywords{Fredholm determinants, non-self-adjoint operators, multi-dimensional 
Schr\"odinger operators, Dirichlet-to-Neumann maps, nonlocal interactions.}

%%%%%%%%%%%%%%%%%%%%%%%%%%%%%%%%%%%%%%%%
%%%%%%%%%%%%%%%%%%%%%%%%%%%%%%%%%%%%%%%%
\begin{abstract}
We consider Dirichlet-to-Neumann maps associated with (not necessarily self-adjoint) 
Schr\"odinger operators describing nonlocal interactions in $L^2(\Omega; d^n x)$, 
where $\Om\subset\bbR^n$, $n\in\bbN$, $n\geq 2$, are open sets with a compact, nonempty boundary $\partial\Om$ satisfying certain regularity conditions. As an application we describe a reduction of a certain ratio of Fredholm perturbation determinants associated with operators in 
$L^2(\Om; d^n x)$ to Fredholm perturbation determinants associated with operators in $L^2(\partial\Om; d^{n-1}\sigma)$, $n\in\bbN$, $n\geq 2$. This leads to an extension of a variant of a celebrated formula due to Jost and Pais, which reduces the Fredholm perturbation determinant associated with a Schr\"odinger operator on the half-line $(0,\infty)$, in the case of local interactions, to a simple Wronski determinant of appropriate distributional  solutions of the underlying Schr\"odinger equation. 
\end{abstract}
%%%%%%%%%%%%%%%%%%%%%%%%%%%%%%%%%%%%%%%%%
%%%%%%%%%%%%%%%%%%%%%%%%%%%%%%%%%%%%%%%%%

\maketitle

%%%%%%%%%%%%%%%%%%%%%%%%%%%%%%%%%%%%%%%%%
%%%%%%%%%%%%%%%%%%%%%%%%%%%%%%%%%%%%%%%%%
\section{Introduction}\label{s1}
%%%%%%%%%%%%%%%%%%%%%%%%%%%%%%%%%%%%%%%%%
%%%%%%%%%%%%%%%%%%%%%%%%%%%%%%%%%%%%%%%%%

Since a considerable part of W.\ Plessas' research focuses on various aspects of nonlocal (in particular, separable) interactions, we thought it would be appropriate to derive some of our recent results on Dirichlet-to-Neumann maps and Fredholm determinants in \cite{GMZ07} in the context of nonlocal interactions. 

To illustrate the principle ideas underlying this paper, we briefly recall a
celebrated result of Jost and Pais \cite{JP51}, who proved  in 1951 a spectacular
reduction of the Fredholm determinant associated with the Birman--Schwinger kernel of a one-dimensional Schr\"odinger operator on a half-line, to a  simple Wronski determinant of distributional solutions of the underlying
Schr\"odinger equation. This Wronski determinant also equals the so-called Jost function of the corresponding half-line Schr\"odinger operator. In this paper we prove a certain multi-dimensional variant of this result in the presence of nonlocal (in fact, trace class) interactions.

To describe the result due to Jost and Pais \cite{JP51}, we need a few preparations (we refer to our list of notations at the end of the introduction).
Denoting by $H_{0,+}^D$ and $H_{0,+}^N$ the one-dimensional Dirichlet and Neumann Laplacians  in $L^2((0,\infty);dx)$, and assuming
\begin{equation}
\hatt V\in L^1((0,\infty);dx),   \lb{1.1}
\end{equation}
we introduce the perturbed Schr\"odinger operators
$\hatt H_{+}^D$ and $\hatt H_{+}^N$ in $L^2((0,\infty);dx)$ by
\begin{align}
&\hatt H_{+}^Df=-f''+ \hatt Vf,  \no \\
&f\in \dom\big(\hatt H_{+}^D\big)= \big\{g\in L^2((0,\infty); dx) \,\big|\, g,g'
\in AC([0,R])
\text{ for all $R>0$}, \\
& \hspace*{4.6cm} g(0)=0, \, \big(-g''+ \hatt Vg\big)\in L^2((0,\infty); dx)\big\}, \no \\
&\hatt H_{+}^Nf=-f''+ \hatt Vf,  \no \\
&f\in \dom\big(\hatt H_{+}^N\big)= \big\{g\in L^2((0,\infty); dx) \,\big|\, g,g'
\in AC([0,R])
\text{ for all $R>0$}, \\
& \hspace*{4.5cm} g'(0)=0, \, \big(-g''+ \hatt Vg\big)\in L^2((0,\infty); dx)\big\}. \no
\end{align}
(Here $AC([0,R])$ denotes the set of absolutely continuous functions on $[0,R]$.)
Thus, $\hatt H_{+}^D$ and $\hatt H_{+}^N$ are self-adjoint if and only if $\hatt V$ is
real-valued, but the latter restriction plays no special role in our present context.

A fundamental system of solutions $\phi_+^D(z,\cdot)$,
$\theta_+^D(z,\cdot)$, and the Jost solution $f_+(z,\cdot)$ of
\begin{equation}
-\psi''(z,x)+ \hatt V\psi(z,x)=z\psi(z,x), \quad z\in\bbC\backslash\{0\}, \;
x\geq 0,   \lb{1.4}
\end{equation}
are then introduced via the standard Volterra integral equations
\begin{align}
\phi_+^D(z,x)&=z^{-1/2}\sin(z^{1/2}x)+\int_0^x dx' \, z^{-1/2}\sin(z^{1/2}(x-x'))
\hatt V(x')\phi_+^D(z,x'), \\
\theta_+^D(z,x)&=\cos(z^{1/2}x)+\int_0^x dx' \, z^{-1/2}\sin(z^{1/2}(x-x'))
\hatt V(x')\theta_+^D (z,x'), \\
f_+(z,x)&=e^{iz^{1/2}x}-\int_x^\infty dx' \,
z^{-1/2}\sin(z^{1/2}(x-x')) \hatt V(x')f_+(z,x'),  \lb{1.7} \\
&\hspace*{3.85cm}  z\in\bbC\backslash\{0\}, \; \Im(z^{1/2})\geq 0, \;
x\geq 0.  \no
\end{align}

In addition, we introduce
\begin{equation}
\hatt u=\exp\big(i\arg\big(\hatt V\big)\big)\big|\hatt V\big|^{1/2}, 
\quad \hatt v=\big|\hatt V\big|^{1/2}, \, \text{ so
that } \, \hatt V= \hatt u\, \hatt v,
\end{equation}
and denote by $I_+$ the identity operator in $L^2((0,\infty); dx)$. Moreover,
we denote by
\begin{equation}
W(f,g)(x)=f(x)g'(x)-f'(x)g(x), \quad x \geq 0,
\end{equation}
the Wronskian of $f$ and $g$, where $f,g \in C^1([0,\infty))$.

Then, the following results hold:

%%%%%%%%%%%%%%%%%%%%%%%%%%%%%%%%%%%%%
\begin{theorem} \lb{t1.1}
Assume $\hatt V\in L^1((0,\infty);dx)$ and let $z\in\bbC\backslash [0,\infty)$
with $\Im(z^{1/2})>0$. Then,
\begin{equation}
\ol{\hatt u\big(H_{0,+}^D-z I_+\big)^{-1} \hatt v}, \,
\ol{\hatt u\big(H_{0,+}^N-z I_+\big)^{-1} \hatt v} \in \cB_1\big(L^2((0,\infty);dx)\big) 
\lb{1.10} 
\end{equation}
and
\begin{align}
\det\Big(I_+ +\ol{\hatt u\big(H_{0, +}^D-z I_+\big)^{-1} \hatt v}\,\Big) &=
1+z^{-1/2}\int_0^\infty dx\, \sin(z^{1/2}x) \hatt V(x)f_+(z,x)   \no \\
&= W(f_+(z,\cdot),\phi_+^D(z,\cdot)) = f_+(z,0),    \lb{1.11}  \\
\det\Big(I_+ +\ol{\hatt u\big(H_{0, +}^N-z I_+\big)^{-1} \hatt v}\,\Big)
&= 1+ i z^{-1/2} \int_0^\infty
dx\, \cos(z^{1/2}x) \hatt V(x)f_+(z,x) \no  \\
&= - \frac{W(f_+(z,\cdot),\theta_+^D (z,\cdot))}{i z^{1/2}} =
\frac{f_+'(z,0)}{i z^{1/2}}.    \lb{1.12}
\end{align}
\end{theorem}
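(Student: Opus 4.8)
The plan is to establish \eqref{1.10} first and then evaluate the determinants in \eqref{1.11}, \eqref{1.12} in three stages: reduce the operator determinant to a Fredholm determinant of an integral operator on $L^2((0,\infty);dx)$, identify that Fredholm determinant with the explicit integral expression involving the Jost solution, and finally recognize the integral expression as the stated Wronskian and boundary value. For the trace class claim \eqref{1.10}, I would use the explicit integral kernels of $\big(H_{0,+}^D-zI_+\big)^{-1}$ and $\big(H_{0,+}^N-zI_+\big)^{-1}$, namely
\begin{equation}
G_{0,+}^D(z,x,x') = z^{-1/2}\sin(z^{1/2}\min(x,x'))e^{iz^{1/2}\max(x,x')}, \notag
\end{equation}
and the analogous Neumann kernel with $\sin$ replaced by $\cos$ in the first factor; sandwiching by $\hatt u$ and $\hatt v$, the $L^1$-assumption $\hatt V\in L^1$ makes $\hatt u,\hatt v\in L^2$, and with $\Im(z^{1/2})>0$ the exponential factor is bounded, so the kernel is in $L^2((0,\infty)^2; dx\,dx')$, hence Hilbert--Schmidt. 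To upgrade from Hilbert--Schmidt to trace class, I would factor the resolvent through the rank-related structure (e.g., write the Birman--Schwinger-type operator as a product of two Hilbert--Schmidt operators, using that $\min(x,x')$ factorizes the kernel into a sum/product over the diagonal split $x\lessgtr x'$), so $\cB_2\cdot\cB_2\subset\cB_1$ gives the claim.

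Next I would compute $\det\big(I_+ + \ol{\hatt u(H_{0,+}^D-zI_+)^{-1}\hatt v}\,\big)$. The key device is the standard identity relating this operator determinant to a (modified) Fredholm determinant of the Volterra-type perturbation. Concretely, I expect one shows that $I_+ + \ol{\hatt u(H_{0,+}^D-zI_+)^{-1}\hatt v}$ is, up to a similarity and the trace class structure, the operator $I_+ - K_D(z)$ where $K_D(z)$ has kernel built from $z^{-1/2}\sin(z^{1/2}(x-x'))\chi_{\{x'<x\}}\hatt V(x')$ — i.e., the Volterra kernel generating $\phi_+^D$. Because that Volterra operator is quasi-nilpotent, its Fredholm determinant is $1$, and the nontrivial content comes from the boundary/Jost piece; the clean way to see the result is to use the resolvent (second resolvent) identity $\big(\hatt H_+^D - zI_+\big)^{-1} = \big(H_{0,+}^D - zI_+\big)^{-1} - \big(H_{0,+}^D-zI_+\big)^{-1}\hatt u\big[I_+ + \ol{\hatt v(H_{0,+}^D-zI_+)^{-1}\hatt u}\,\big]^{-1}\hatt v\big(H_{0,+}^D-zI_+\big)^{-1}$, together with the fact (symmetry of the Fredholm determinant under cyclic-type rearrangement, $\det(I+AB)=\det(I+BA)$) that lets us pass between $\hatt u(\dots)\hatt v$ and $\hatt v(\dots)\hatt u$. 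Evaluating the determinant then amounts to a rank-one--along--the--boundary computation: expanding the Fredholm determinant via the solution $f_+(z,\cdot)$ of the integral equation \eqref{1.7}, the series telescopes into $1 + z^{-1/2}\int_0^\infty dx\,\sin(z^{1/2}x)\hatt V(x)f_+(z,x)$.

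To finish, I would convert that integral into the Wronskian and the boundary value. Using the integral equation \eqref{1.7} for $f_+$ and the free solution $z^{-1/2}\sin(z^{1/2}x)=\phi_+^D(z,x)$ when $\hatt V\equiv 0$, one checks by integration by parts (and using that both $f_+(z,\cdot)$ and $\phi_+^D(z,\cdot)$ solve \eqref{1.4}, so $W(f_+,\phi_+^D)'(x) = f_+(z,x)\hatt V(x)\phi_+^D(z,x) - \phi_+^D(z,x)\hatt V(z,x)f_+(z,x)=0$... ) more precisely: $\frac{d}{dx}W(f_+(z,\cdot),\phi_+^D(z,\cdot))(x)=0$ since both solve the \emph{same} equation \eqref{1.4}, so the Wronskian is $x$-independent; evaluating at $x=0$ using $\phi_+^D(z,0)=0$, $(\phi_+^D)'(z,0)=1$ gives $W(f_+(z,\cdot),\phi_+^D(z,\cdot))=f_+(z,0)$. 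To see this common value equals $1+z^{-1/2}\int_0^\infty \sin(z^{1/2}x)\hatt V(x)f_+(z,x)\,dx$, take $x\to 0^+$ in \eqref{1.7}: $f_+(z,0)=1 - \int_0^\infty dx'\,z^{-1/2}\sin(-z^{1/2}x')\hatt V(x')f_+(z,x') = 1 + z^{-1/2}\int_0^\infty dx'\,\sin(z^{1/2}x')\hatt V(x')f_+(z,x')$. The Neumann case \eqref{1.12} is entirely parallel, with $\theta_+^D$ (normalized by $\theta_+^D(z,0)=1$, $(\theta_+^D)'(z,0)=0$) replacing $\phi_+^D$, the free Neumann resolvent kernel carrying a $\cos$ in place of a $\sin$, so that one obtains $1 + iz^{-1/2}\int_0^\infty dx\,\cos(z^{1/2}x)\hatt V(x)f_+(z,x)$, and then $-W(f_+,\theta_+^D)/(iz^{1/2}) = f_+'(z,0)/(iz^{1/2})$ follows from evaluating the ($x$-independent) Wronskian at $x=0$.

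I expect the main obstacle to be the careful bookkeeping in the middle stage: justifying the passage from the operator determinant $\det\big(I_+ + \ol{\hatt u(H_{0,+}^D-zI_+)^{-1}\hatt v}\,\big)$ to the explicit integral, which requires (i) a clean argument that the relevant operator can be realized via the Volterra kernel so that only the ``semiclassical/boundary'' part contributes, and (ii) controlling the exchange of the closure bar with the factorization and with the cyclic rearrangement $\det(I+AB)=\det(I+BA)$ at trace class level, i.e.\ ensuring all intermediate operators are genuinely trace class or at least that the $2$-modified determinant corrections vanish. The analytic estimates (Hilbert--Schmidt/trace class norms, convergence of the Volterra--Neumann series for $f_+$, uniform bounds in $x$ for $\Im(z^{1/2})>0$) are routine given $\hatt V\in L^1$ and are suppressed here.
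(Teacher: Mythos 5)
Two caveats up front: the paper does not actually supply its own proof of Theorem \ref{t1.1}. It states the result and refers the reader to Jost--Pais \cite{JP51}, Buslaev--Faddeev \cite{BF60}, and in particular to the detailed treatment in \cite{GM03} (whose Section 4 is said to yield \eqref{1.12} by analogy). So ``the paper's own proof'' is, in effect, the semi-separable-kernel computation of \cite{GM03}, and your outline should be judged against that.

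Your opening and closing stages are fine. The trace-class claim \eqref{1.10} is correctly attributed to the boundedness of the free Green's function for $\Im(z^{1/2})>0$ together with $\hatt u,\hatt v\in L^2$ (giving Hilbert--Schmidt), and the upgrade to $\cB_1$ via a $\cB_2\cdot\cB_2$ factorization is the right idea; note however that the relevant factorization is $\hatt u\,R_0^{1/2}\cdot R_0^{1/2}\,\hatt v$, not the $\min/\max$ split of the kernel --- the semi-separable decomposition by itself is not a product of two Hilbert--Schmidt operators. Also, your Neumann Green's function is off by a factor: $G_{0,+}^N(z,x,x') = (iz^{1/2})^{-1}\cos(z^{1/2}\min(x,x'))e^{iz^{1/2}\max(x,x')}$, not just ``$\sin$ replaced by $\cos$''. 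The final stage --- constancy of the Wronskian, evaluation at $x=0$ with $\phi_+^D(z,0)=0$, $(\phi_+^D)'(z,0)=1$ (resp.\ $\theta_+^D(z,0)=1$, $(\theta_+^D)'(z,0)=0$), and reading off $f_+(z,0)$ and $f_+'(z,0)$ from \eqref{1.7} --- is entirely correct and matches the standard argument.

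The genuine gap is in the middle stage, which you yourself flag as ``the main obstacle.'' Two problems there. First, the claim that $I_+ + \overline{\hatt u(H_{0,+}^D-zI_+)^{-1}\hatt v}$ is, ``up to similarity,'' $I_+ - K_D(z)$ with $K_D(z)$ the Volterra kernel $z^{-1/2}\sin(z^{1/2}(x-x'))\chi_{\{x'<x\}}\hatt V(x')$ is not correct: the Birman--Schwinger operator has the \emph{non-Volterra} semi-separable kernel $\hatt u(x)\,z^{-1/2}\sin(z^{1/2}\min(x,x'))e^{iz^{1/2}\max(x,x')}\hatt v(x')$, and the Volterra operator is only one \emph{piece} of its semi-separable decomposition. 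The whole content of the Jost--Pais theorem --- and the whole content of \cite{GM03} --- lies in showing that the Fredholm determinant of such a semi-separable operator reduces to a \emph{finite} (here $1\times1$) determinant involving the Jost solution, precisely because one can split $K=$ (Volterra part with determinant $1$) $+$ (finite-rank-structure correction) and then apply $\det(I+AB)=\det(I+BA)$ to the correction. Writing ``the series telescopes into $1+z^{-1/2}\int\sin(z^{1/2}x)\hatt V f_+$'' does not carry out this reduction; that telescoping is exactly what must be proven and is where the Volterra solution $f_+$ actually enters. Second, the invocation of the second resolvent identity relating $(\hatt H_+^D-z)^{-1}$ to $(H_{0,+}^D-z)^{-1}$ is beside the point here: that identity (and the commutation $\det(I+AB)=\det(I+BA)$ across the boundary trace) is the engine behind Theorem \ref{t1.2} and Theorem \ref{t4.1}, i.e.\ the \emph{ratio} formula, not behind evaluating either determinant in Theorem \ref{t1.1} individually. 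So the proposal correctly identifies the destination but leaves the decisive Fredholm-determinant-to-Wronskian reduction unproved.
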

%%%%%%%%%%%%%%%%%%%%%%%%%%%%%%%%%%%%%%

Equation \eqref{1.11} is the modern formulation of the classical result due to Jost and
Pais \cite{JP51} (cf.\ also \cite{BF60} and the detailed discussion in \cite{GM03}). Performing calculations similar to Section 4 in \cite{GM03} for the pair of 
operators $H_{0,+}^N$ and $\hatt H_+^N$, one obtains the analogous result 
\eqref{1.12}.  

For an extension of the classical Jost--Pais formula \eqref{1.11} from local 
interactions $\hatt V$ to nonlocal interactions we refer to \cite{WB71} (see 
also \cite{SW71}).

We emphasize that \eqref{1.11} and \eqref{1.12} exhibit the
remarkable fact that the Fredholm determinant associated with trace
class operators in the infinite-dimensional space $L^2((0,\infty);
dx)$ is reduced to a simple Wronski determinant of $\bbC$-valued
distributional solutions of \eqref{1.4}. This fact goes back to Jost
and Pais \cite{JP51} (see also \cite{BF60}, \cite{GM03}, \cite{Ne72}, \cite{Ne80},
\cite[Sect.\ 12.1.2]{Ne02}, \cite{Si00}, \cite[Proposition 5.7]{Si05},
and the extensive literature cited in these references).
Next, we explore the extent to which
this fact may generalize to higher dimensions $n\in\bbN$, $n\geq 2$.
While a straightforward generalization of \eqref{1.11}, \eqref{1.12}
appears to be difficult, we will next derive a formula for the ratio
of such determinants which indeed permits a direct extension to
higher dimensions.

For this purpose we introduce the boundary trace operators
$\ga_D$ (Dirichlet trace) and $\ga_N$ (Neumann trace) which, in the
current one-dimensional half-line situation, are just the functionals,
\begin{equation}
\ga_D \colon \begin{cases} C([0,\infty)) \to \bbC, \\
\hspace*{1.3cm} g \mapsto g(0), \end{cases}   \quad
\ga_N \colon \begin{cases}C^1([0,\infty)) \to \bbC,  \\
\hspace*{1.43cm} h \mapsto   - h'(0). \end{cases}
\end{equation}
In addition, we denote by $m_{0,+}^D$, $m_+^D$, $m_{0,+}^N$, and $m_+^N$
the Weyl--Titchmarsh $m$-functions corresponding to $H_{0,+}^D$,
$\hatt H_{+}^D$, $H_{0,+}^N$, and $\hatt H_{+}^N$, respectively, that is,
\begin{align}
m_{0,+}^D(z) &= i z^{1/2}, \qquad m_{0,+}^N (z)= -\frac{1}{m_{0,+}^D(z)}
= i z^{-1/2},  \lb{1.14} \\
m_{+}^D(z) &= \frac{f_+'(z,0)}{f_+(z,0)}, \quad m_{+}^N (z)=
-\frac{1}{m_{+}^D(z)} = -\frac{f_+(z,0)}{f_+'(z,0)}.  \lb{1.15}
\end{align}

Then we obtain the following result for the ratio of the perturbation determinants in
\eqref{1.11} and \eqref{1.12}:

%%%%%%%%%%%%%%%%%%%%%%%%%%%%%%%%%%%%%
\begin{theorem} \lb{t1.2}
Assume $\hatt V\in L^1((0,\infty);dx)$ and let $z\in\bbC\backslash
\si\big(\hatt H_+^D\big)$ with $\Im(z^{1/2})>0$. Then,
\begin{align}
& \frac{\det\Big(I_+ +\ol{\hatt u\big(H_{0, +}^N-z I_+\big)^{-1} \hatt v}\,\Big)}
{\det\Big(I_+ +\ol{\hatt u\big(H_{0, +}^D-z I_+\big)^{-1} \hatt v}\,\Big)}
 = 1 - \Big(\,\ol{\ga_N\big(\hatt H_+^D-z I_+\big)^{-1} \hatt V
\big[\ga_D(H_{0,+}^N-\ol{z}I_+)^{-1}\big]^*}\,\Big)   \lb{1.16}  \\
& \quad = \f{W(f_+(z),\phi_+^N(z))}{i z^{1/2}W(f_+(z),\phi_+^D(z))} =
\f{f'_+(z,0)}{i z^{1/2}f_+(z,0)} = \f{m_+^D(z)}{m_{0,+}^D(z)} =
\f{m_{0,+}^N(z)}{m_+^N(z)}.   \lb{1.17}
\end{align}
\end{theorem}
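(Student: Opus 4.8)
The plan is to establish \eqref{1.16} by a chain of Fredholm-determinant identities in the spirit of \cite{GM03}, and then to deduce the remaining equalities in \eqref{1.16}--\eqref{1.17} from Theorem \ref{t1.1} and \eqref{1.14}, \eqref{1.15}. Write $R_D(z)=(H_{0,+}^D-zI_+)^{-1}$ and $R_N(z)=(H_{0,+}^N-zI_+)^{-1}$. Both free resolvents have symmetric integral kernels, so any trace class integral operator and the operator with the transposed kernel have equal Fredholm determinants; hence the determinants in \eqref{1.11}, \eqref{1.12} are unchanged if $\hatt u$ and $\hatt v$ are interchanged, and I would work with $A:=\ol{\hatt v R_D(z)\hatt u}$ and $B:=\ol{\hatt v R_N(z)\hatt u}$ in $\cB_1(L^2((0,\infty);dx))$. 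Since $\det(I_++A)=f_+(z,0)\neq0$ by \eqref{1.11} and the Birman--Schwinger principle (using $z\notin\si(\hatt H_+^D)$), the inverse $(I_++A)^{-1}$ exists, and multiplicativity of the Fredholm determinant gives
\begin{equation*}
\frac{\det\big(I_++\ol{\hatt u R_N(z)\hatt v}\big)}{\det\big(I_++\ol{\hatt u R_D(z)\hatt v}\big)}=\frac{\det(I_++B)}{\det(I_++A)}=\det\big(I_++(B-A)(I_++A)^{-1}\big).
\end{equation*}

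Next I would insert the factored form of the difference of the free Dirichlet and Neumann resolvents,
\begin{equation*}
R_N(z)-R_D(z)=-\big[\ga_D R_N(\ol z)\big]^*\,\ga_N R_D(z),
\end{equation*}
which in the present one-dimensional situation is the rank-one identity coming from $G_{0,+}^N(z,x,x')-G_{0,+}^D(z,x,x')=-(iz^{1/2})^{-1}e^{iz^{1/2}x}e^{iz^{1/2}x'}$ (in the $n\ge2$ version developed below, this step is exactly where the regularity hypotheses on $\dOm$ enter). This yields $B-A=-\Phi\Psi$, where $\Phi:=\ol{\hatt v\big[\ga_D R_N(\ol z)\big]^*}$ maps $\bbC$ into $L^2((0,\infty);dx)$ and $\Psi:=\ol{\ga_N R_D(z)\hatt u}$ maps $L^2((0,\infty);dx)$ into $\bbC$, so that, by the Sylvester-type determinant identity,
\begin{equation*}
\det\big(I_++(B-A)(I_++A)^{-1}\big)=\det\big(I_+-\Phi\Psi(I_++A)^{-1}\big)=1-\Psi(I_++A)^{-1}\Phi;
\end{equation*}
in general dimension this is the step that replaces a determinant over $L^2(\Om;d^nx)$ by one over $L^2(\dOm;d^{n-1}\si)$. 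Finally, the Birman--Schwinger-type resolvent identity $(\hatt H_+^D-zI_+)^{-1}=R_D(z)-R_D(z)\hatt u(I_++A)^{-1}\hatt v R_D(z)$ gives $(\hatt H_+^D-zI_+)^{-1}\hatt u=R_D(z)\hatt u(I_++A)^{-1}$, hence $\ga_N(\hatt H_+^D-zI_+)^{-1}\hatt u=\Psi(I_++A)^{-1}$, and therefore
\begin{equation*}
\Psi(I_++A)^{-1}\Phi=\ga_N(\hatt H_+^D-zI_+)^{-1}\hatt u\,\hatt v\,\big[\ga_D R_N(\ol z)\big]^*=\ol{\ga_N(\hatt H_+^D-zI_+)^{-1}\hatt V\big[\ga_D(H_{0,+}^N-\ol z I_+)^{-1}\big]^*},
\end{equation*}
which is precisely \eqref{1.16}.

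The remaining equalities in \eqref{1.16}--\eqref{1.17} follow upon dividing \eqref{1.12} by \eqref{1.11}: the ratio equals $f_+'(z,0)\big/\big(iz^{1/2}f_+(z,0)\big)$, which by \eqref{1.14}, \eqref{1.15} equals $m_+^D(z)/m_{0,+}^D(z)$, and hence also $m_{0,+}^N(z)/m_+^N(z)$ since $m^N=-1/m^D$. Since $\phi_+^D(z,\cdot)$, $\phi_+^N(z,\cdot)$ and $f_+(z,\cdot)$ all solve \eqref{1.4}, the Wronskians $W(f_+(z,\cdot),\phi_+^D(z,\cdot))$ and $W(f_+(z,\cdot),\phi_+^N(z,\cdot))$ are independent of $x$; evaluating them at $x=0$, using $\phi_+^D(z,0)=0$, $(\phi_+^D)'(z,0)=1$ and the corresponding Neumann-type normalization of $\phi_+^N$, gives $W(f_+(z,\cdot),\phi_+^D(z,\cdot))=f_+(z,0)$ and $W(f_+(z,\cdot),\phi_+^N(z,\cdot))=f_+'(z,0)$, which produces the Wronskian expressions in \eqref{1.17}.

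The main obstacle is not the algebra above but its functional-analytic justification: verifying the mapping properties of the boundary trace operators $\ga_D R_N(\ol z)$, $\ga_N R_D(z)$ and $\ga_N(\hatt H_+^D-zI_+)^{-1}$ into and out of the boundary space, the membership of $\Phi$, $\Psi$ and of the relevant operator products in the appropriate trace ideals (so that every determinant and the Sylvester identity is meaningful), the factored form of $R_N(z)-R_D(z)$ together with the normalizing constant $m_{0,+}^D(z)=iz^{1/2}$ implicit in it, and the bookkeeping of the closures denoted by the overbars and of the transposition swapping $\hatt u R\hatt v$ with $\hatt v R\hatt u$ --- precisely the points that, in the multi-dimensional and nonlocal setting treated in the body of the paper, necessitate the regularity conditions imposed on $\dOm$.
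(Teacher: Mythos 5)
Your proof is correct and follows essentially the same route as the paper's own argument for the multi-dimensional analog (Lemma \ref{l3.5} together with the proof of Theorem \ref{t4.1}): express the determinant ratio as $\det\big(I+(B-A)(I+A)^{-1}\big)$, factor the resolvent difference through the boundary trace operators, use the cyclic identity \eqref{4.0} to move the determinant from $L^2((0,\infty);dx)$ to the boundary space $\bbC$, and finish with the symmetrized Birman--Schwinger resolvent formula to recognize $\Psi(I+A)^{-1}\Phi$ as $\ol{\ga_N\big(\hatt H_+^D-zI_+\big)^{-1}\hatt V\big[\ga_D(H_{0,+}^N-\ol z I_+)^{-1}\big]^*}$; the remaining equalities follow from dividing \eqref{1.12} by \eqref{1.11} and invoking \eqref{1.14}--\eqref{1.15}. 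The functional-analytic verifications you defer (trace ideal membership, the rank-one resolvent-difference kernel identity) are exactly the items discharged in \cite{GM03}, \cite{GMZ07}, and your one-dimensional rank-one version of them is the correct specialization.
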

%%%%%%%%%%%%%%%%%%%%%%%%%%%%%%%%%%%%

The multi-dimensional generalizations to Schr\"odinger operators in $L^2(\Om;d^n x)$, corresponding to an open set $\Om \subset \bbR^n$ with compact, nonempty boundary $\partial\Om$, more precisely, the proper operator-valued generalization of the Weyl--Titchmarsh function $m_+^D(z)$ is then given by the Dirichlet-to-Neumann map, denoted by $M_{\Om}^D(z)$ in $L^2(\dOm; d\sigma^{n-1})$. This operator-valued map  indeed played a fundamental role in our extension of \eqref{1.17} to the higher-dimensional case in \cite{GMZ07}.  

We recall the assumptions on the set $\Omega$ in \cite{GMZ07}:

%%%%%%%%%%%%%%%%%%%%%%%%%%%%%%%%%%%%%%%
\begin{hypothesis} \lb{h1.3a}
Let $n\in\bbN$, $n\geq 2$, and assume that $\Omega\subset{\bbR}^n$ is
an open set with a compact, nonempty boundary
$\partial\Omega$. In addition,
we assume that one of the following three conditions holds: \\
$(i)$ \, $\Omega$ is of class $C^{1,r}$ for some $1/2 < r <1$; \\
$(ii)$  \hspace*{.0001pt} $\Omega$ is convex; \\
$(iii)$ $\Omega$ is a Lipschitz domain satisfying a {\it uniform exterior ball condition} $($UEBC\,$)$.  
\end{hypothesis}
%%%%%%%%%%%%%%%%%%%%%%%%%%%%%%%%%%%%%%%

We note that while $\dOm$ is assumed to be compact, $\Om$ may be unbounded in connection with conditions $(i)$ or $(iii)$. For more details in the context of the notation used in Hypothesis \ref{h1.3a} we refer to \cite[App.\ A]{GMZ07}.

Given the self-adjoint and nonnegative Dirichlet and
Neumann Laplacians $H_{0,\Om}^D$ and $H_{0,\Om}^N$ associated
with the domain $\Om$ in $L^2(\Om; d^n x)$ as defined in \eqref{2.39} and 
\eqref{2.20}, respectively (although, the latter can be described in additional detail 
under the stronger Hypotheses \ref{h1.3a} as compared to Hypothesis \ref{h2.1}, 
cf.\ \cite{GMZ07}), we now introduce 
$\hatt H_{\Om}^D$ and $\hatt H_{\Om}^N$, the Dirichlet and Neumann 
Schr\"odinger operators in $L^2(\Om; d^n x)$ associated with the (local) 
differential expressions $-\Delta + \hatt V(x)$ and Dirichlet and Neumann 
boundary conditions on $\partial\Om$ as follows:
\begin{align}
& \big(\hatt H^D_{\Om}-zI_\Om\big)^{-1} = \big(H^D_{0,\Om}-zI_\Om\big)^{-1}  \no \\
& \quad - \ol{\big(H^D_{0,\Om}-zI_\Om\big)^{-1} \hatt v}
\Big[I_\Om+\ol{\hatt u\big(H^D_{0,\Om}-zI_\Om\big)^{-1} \hatt v}\,\Big]^{-1}
\hatt u\big(H^D_{0,\Om}-zI_\Om\big)^{-1},    \lb{1.17a} \\ 
& \big(\hatt H^N_{\Om}-zI_\Om\big)^{-1} = \big(H^N_{0,\Om}-zI_\Om\big)^{-1}  \no \\
& \quad - \ol{\big(H^N_{0,\Om}-zI_\Om\big)^{-1} \hatt v}
\Big[I_\Om+\ol{\hatt u\big(H^N_{0,\Om}-zI_\Om\big)^{-1} \hatt v}\,\Big]^{-1}
\hatt u\big(H^N_{0,\Om}-zI_\Om\big)^{-1}. \lb{1.17b}
\end{align}

Then the principal new result proven in \cite{GMZ07} reads as follows:

%%%%%%%%%%%%%%%%%%%%%%%%%%%%%%%%%%%%%%%
\begin{theorem} [\cite{GMZ07}] \lb{t1.3}
Assume that $\Om$ satisfies Hypothesis \ref{h1.3a}  and suppose that
$\hatt V\in L^p(\Om;d^nx)$ for some $p$ satisfying $p>4/3$ in the case
$n=2$, and $p>n/2$ in the case $n\geq3$. In addition, let $k\in\bbN$, $k\geq p$ and
$z\in\bbC\big\backslash\big(\si\big(\hatt H_{\Om}^D\big)\cup
\si\big(H_{0,\Om}^D\big) \cup \si\big(H_{0,\Om}^N\big)\big)$. Then,
\begin{align}
&
\frac{\det{}_k\Big(I_{\Om}+\ol{\hatt u\big(H_{0,\Om}^N-zI_{\Om}\big)^{-1} \hatt v}\,\Big)}
{\det{}_k\Big(I_{\Om}+\ol{\hatt u\big(H_{0,\Om}^D-zI_{\Om}\big)^{-1} \hatt v}\,\Big)} \no \\
& \quad = \det{}_k\Big(I_{\dOm} -
\ol{\ga_N\big(\hatt H_{\Om}^D-zI_{\Om}\big)^{-1} \hatt V
\big[\ga_D(H_{0,\Om}^N-\ol{z}I_{\Om})^{-1}\big]^*}\,\Big)
e^{\tr(T_k(z))}   \lb{1.18}  \\
& \quad = \det{}_k\big(M_{\Om}^{D}(z)M_{0,\Om}^{D}(z)^{-1}\big)
e^{\tr(T_k(z))}.   \lb{1.19}
\end{align}
\end{theorem}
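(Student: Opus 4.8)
The plan is to follow the structure of the one-dimensional proof of Theorem~\ref{t1.2} (formula \eqref{1.16}), but keeping careful track of the fact that in dimension $n\ge 2$ the relevant Birman--Schwinger-type operators $\ol{\hatt u(H_{0,\Om}^{D/N}-zI_\Om)^{-1}\hatt v}$ are no longer trace class, only in the Schatten ideal $\cB_k$ for suitable $k$, which forces the use of modified determinants $\det{}_k(\cdot)$ and accounts for the correction factor $e^{\tr(T_k(z))}$. First I would verify, under the stated integrability hypotheses on $\hatt V$ (and the standard mapping properties of $(H_{0,\Om}^{D/N}-zI_\Om)^{-1}$ between $L^2(\Om)$ and Sobolev spaces, together with the Rellich--Kondrachov/Sobolev embeddings that give the membership $\ol{\hatt u(H_{0,\Om}^{D/N}-zI_\Om)^{-1}\hatt v}\in\cB_k$ for $k\ge p$), that both modified determinants in the quotient on the left-hand side are well defined and nonzero for $z$ outside the indicated spectra. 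This is the analytic input analogous to \eqref{1.10}.

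Next I would establish the purely algebraic identity underlying the reduction. Writing $K_\bullet(z)=\ol{\hatt u(H_{0,\Om}^\bullet-zI_\Om)^{-1}\hatt v}$ for $\bullet\in\{D,N\}$, one uses the second resolvent formula \eqref{1.17a}, \eqref{1.17b} together with the classical commutation/cyclicity property $\det(I+AB)=\det(I+BA)$ (for trace class, and its $\det{}_k$ analogue up to an explicit exponential correction) to rewrite the ratio $\det(I+K_N(z))/\det(I+K_D(z))$. Morally, $(I+K_N)(I+K_D)^{-1}$ equals $I$ plus an operator built out of the difference of the Dirichlet and Neumann resolvents; the key point is that this difference, sandwiched appropriately by $\hatt v$, $\hatt u$, and the boundary traces $\ga_D$, $\ga_N$, collapses onto the boundary $\dOm$ and equals (up to sign) the operator $\ol{\ga_N(\hatt H_\Om^D-zI_\Om)^{-1}\hatt V[\ga_D(H_{0,\Om}^N-\ol z I_\Om)^{-1}]^*}$ appearing in \eqref{1.18}. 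Here I would invoke the Krein-type resolvent formula relating $H_{0,\Om}^D$, $H_{0,\Om}^N$ and their perturbations via the Dirichlet-to-Neumann maps $M_{0,\Om}^D(z)$ and $M_\Om^D(z)$ — exactly the machinery developed for this purpose in \cite{GMZ07} — which is what turns the boundary operator in \eqref{1.18} into $M_\Om^D(z)M_{0,\Om}^D(z)^{-1}$, yielding \eqref{1.19}. The exponential factor $e^{\tr(T_k(z))}$ is the bookkeeping term that records the difference between $\det{}_k$ evaluated on the $L^2(\Om)$ side and on the $L^2(\dOm)$ side when one moves operators across the trace; $T_k(z)$ is an explicit finite sum of (traces of powers of) the relevant operators, and I would simply compute it from the $\det{}_k$ multiplicativity formula $\det{}_k(I+A)\det{}_k(I+B)=\det{}_k((I+A)(I+B))e^{\tr(\text{polynomial in }A,B)}$.

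The main obstacle, as in \cite{GMZ07}, is the last sentence of the previous paragraph made rigorous: one must show that all the individual operators entering the manipulations — in particular the boundary operators $\ga_N(\hatt H_\Om^D-zI_\Om)^{-1}\hatt V[\ga_D(H_{0,\Om}^N-\ol z I_\Om)^{-1}]^*$ on $L^2(\dOm;d^{n-1}\sigma)$ and the factors $M_\Om^D(z)$, $M_{0,\Om}^D(z)^{-1}$ — lie in the right Schatten ideals, and that the passage between determinants on $\Om$ and on $\dOm$ is legitimate (closability of the relevant products, the precise form of $T_k(z)$, and the need for the regularity Hypothesis~\ref{h1.3a} to guarantee that the Neumann trace $\ga_N$ acts boundedly on the appropriate Sobolev spaces and that $M_{0,\Om}^D(z)$ is boundedly invertible with the correct spectral-ideal behavior). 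The delicate dimensional analysis — why $p>4/3$ is needed for $n=2$ and $p>n/2$ for $n\ge 3$, and why $k\ge p$ is the correct threshold for $\cB_k$-membership — is driven by Sobolev embedding and the singularity of the Green's function, and handling it uniformly for all three cases $(i)$--$(iii)$ of Hypothesis~\ref{h1.3a} is the technical heart of the argument. Once these ingredients are in place, \eqref{1.18} and \eqref{1.19} follow by assembling the algebraic identity with the Krein formula and the $\det{}_k$ multiplicativity correction.
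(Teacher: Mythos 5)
Your proposal follows essentially the same route as the paper (see the proof of Theorem~\ref{t4.1}, Lemma~\ref{l3.5}, and Theorem~\ref{t4.2}, which implement exactly this strategy, following \cite{GMZ07}): write the ratio of bulk determinants as $\det{}_k\bigl(I_\Om-(K_N(z)-K_D(z))[I_\Om-K_D(z)]^{-1}\bigr)$ up to the $\det{}_k$-multiplicativity correction, invoke the Nakamura-type identity expressing the difference of the free Dirichlet and Neumann resolvents as a product of boundary trace operators, use the commutation property $\det{}_k(I-AB)=\det{}_k(I-BA)e^{\tr(\cdot)}$ to transfer the determinant from $L^2(\Om;d^nx)$ to $L^2(\dOm;d^{n-1}\sigma)$, absorb the Birman--Schwinger factor $[I_\Om+\ol{\hatt u(H_{0,\Om}^D-zI_\Om)^{-1}\hatt v}\,]^{-1}$ into the perturbed resolvent $(\hatt H_\Om^D-zI_\Om)^{-1}$ via the symmetrized resolvent identity, and finally apply the Krein-type resolvent/Dirichlet-to-Neumann identity of Lemma~\ref{l3.5} to obtain \eqref{1.19}. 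The exponential factor $e^{\tr(T_k(z))}$ and the Schatten-class bookkeeping you describe are precisely the bookkeeping carried out in \cite{GMZ07}, so the proposal is correct and matches the paper's approach.
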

%%%%%%%%%%%%%%%%%%%%%%%%%%%%%%%%%%%%%%%
Here, ${\det}_k(\cdot)$ denotes the modified Fredholm determinant in
connection with $\cB_k$ perturbations of the identity and $T_k(z)$ is
some trace class operator (cf.\ \cite{GMZ07} for more details). In particular, 
$T_2(z)$ is given by
\begin{equation}
T_2(z)=\ol{\ga_N\big(H_{0,\Om}^D-zI_{\Om}\big)^{-1} V
\big(\hatt H_{\Om}^D-zI_{\Om}\big)^{-1} \hatt V
\big[\ga_D \big(H_{0,\Om}^N-\ol{z}I_{\Om}\big)^{-1}\big]^*},
\end{equation}
where $I_{\Om}$ and $I_{\partial\Om}$ represent the identity operators
in $L^2(\Om; d^n x)$ and  $L^2(\partial\Om; d^{n-1} \sigma)$,
respectively (with $d^{n-1}\sigma$ denoting the surface measure on
$\partial\Om$). For a detailed discussion of the Dirichlet-to-Neumann map  
$M_{\Om}^D(z)$ in $L^2(\dOm; d\sigma^{n-1})$ in connection with local 
interactions $V$ we refer to \cite{GMZ07}. For an extensive list of references 
relevant to the material in \eqref{1.10}--\eqref{1.19} we also refer to \cite{GMZ07}.

Lack of space prevents us from describing a detailed list of papers emphasizing the 
mathematical aspects (and peculiarities) of Schr\"odinger operators with nonlocal interactions (i.e., potentials). Hence, we refer, for instance, to \cite{Al80}, 
\cite{BTV67}, \cite{BTV69}, \cite{BGNS86}, \cite[Ch.\ VIII]{CS89}, \cite{Dr76}, \cite{Dr78}, 
\cite{GR64}, \cite{GR65}, \cite{Ne77}, \cite[Ch.\ 9]{Ne02}, \cite{SWW64}, 
and the list of references cited therein.

Finally, we briefly list most of the notational conventions used
throughout this paper. Let $\cH$ be a separable complex Hilbert
space, $(\cdot,\cdot)_{\cH}$ the scalar product in $\cH$ (linear in
the second factor), and $I_{\cH}$ the identity operator in $\cH$.
Next, let $T$ be a linear operator mapping (a subspace of) a
Banach space into another, with $\dom(T)$ denoting the
domain of $T$. The closure of a closable operator $S$ is
denoted by $\ol S$. The spectrum of a closed linear operator
in $\cH$ will be denoted by $\sigma(\cdot)$. The
Banach spaces of bounded and compact linear operators in $\cH$ are
denoted by $\cB(\cH)$ and $\cB_\infty(\cH)$, respectively. Similarly,
the Schatten--von Neumann (trace) ideals will subsequently be denoted
by $\cB_k(\cH)$, $k\in\bbN$. Analogous notation $\cB(\cH_1,\cH_2)$,
$\cB_\infty (\cH_1,\cH_2)$, etc., will be used for bounded, compact,
etc., operators between two Hilbert spaces $\cH_1$ and $\cH_2$. In
addition, $\tr(T)$ denotes the trace of a trace class operator
$T\in\cB_1(\cH)$ and $\det_{k}(I_{\cH}+S)$ represents the (modified)
Fredholm determinant associated with an operator $S\in\cB_k(\cH)$,
$k\in\bbN$ (for $k=1$ we omit the subscript $1$). Moreover, $\cX_1
\hookrightarrow \cX_2$ denotes the continuous embedding of the Banach
space $\cX_1$ into the Banach space $\cX_2$.

%%%%%%%%%%%%%%%%%%%%%%%%%%%%%%%%%%%%%%%%
%%%%%%%%%%%%%%%%%%%%%%%%%%%%%%%%%%%%%%%%
\section{Schr\"odinger Operators with Dirichlet and Neumann boundary conditions and Nonlocal Interactions}
\label{s2}
%%%%%%%%%%%%%%%%%%%%%%%%%%%%%%%%%%%%%%%%
%%%%%%%%%%%%%%%%%%%%%%%%%%%%%%%%%%%%%%%%

In this section we recall various properties of Dirichlet,
$H^D_{0,\Om}$, and Neumann, $H^N_{0,\Om}$, Laplacians in
$L^2(\Om;d^n x)$ associated with open sets $\Omega\subset \bbR^n$, $n\in\bbN$, $n\geq 2$, introduced in Hypothesis \ref{h2.1} below. These results have been discussed in detail in \cite{GM08} (see also \cite{GM09}, \cite{GMZ07}). In addition, we introduce the nonlocal Dirichlet and Neumann
Schr\"odinger operators $H^D_{\Om}$ and $H^N_{\Om}$ in
$L^2(\Om;d^n x)$, that is, perturbations of the Dirichlet and Neumann Laplacians 
$H^D_{0,\Om}$ and $H^N_{0,\Om}$ by a (generally, nonlocal) potential $V$ satisfying
Hypothesis \ref{h2.8}.

We start with introducing our assumptions on the set $\Omega$:

%%%%%%%%%%%%%%%%%%%%%%%%%%%%%%%%%%%%%%%
\begin{hypothesis}\lb{h2.1}
Let $n\in\bbN$, $n\geq 2$, and assume that $\Om\subset{\bbR}^n$ is
an open, bounded, nonempty Lipschitz domain.
% with a compact, nonempty boundary $\dOm$. 
\end{hypothesis}
%%%%%%%%%%%%%%%%%%%%%%%%%%%%%%%%%%%%%%%

For more details in the context of the notation used in Hypothesis \ref{h2.1}, and for our notation in connection with Sobolev spaces in the remainder of this paper we refer to \cite[App.\ A]{GM08}.

We introduce the boundary trace operator $\ga_D^0$ (the Dirichlet trace) by
\begin{equation}
\ga_D^0\colon C(\ol{\Om})\to C(\dOm), \quad \ga_D^0 u = u|_\dOm.   \lb{2.5}
\end{equation}
Then there exists a bounded, linear operator $\gamma_D$ (cf., e.g., 
\cite[Theorem 3.38]{Mc00}),
\begin{align}
\begin{split}
& \ga_D\colon H^{s}(\Om)\to H^{s-(1/2)}(\dOm) \hookrightarrow \LdOm,
\quad 1/2<s<3/2, \lb{2.6}  \\
& \ga_D\colon H^{3/2}(\Om)\to H^{1-\varepsilon}(\dOm) \hookrightarrow \LdOm,
\quad \varepsilon \in (0,1), 
\end{split}
\end{align}
whose action is compatible with that of $\ga_D^0$. That is, the two
Dirichlet trace  operators coincide on the intersection of their
domains. We recall that $d^{n-1}\sigma$ denotes the surface measure on
$\dOm$. 

Moreover, we recall that 
\begin{equation}
\ga_D\colon H^{s}(\Om)\to H^{s-(1/2)}(\dOm) \, \text{ is onto for  
$1/2<s<3/2$.} \lb{2.6a}
\end{equation}

While, in the class of bounded Lipschitz subdomains in $\bbR^n$, 
the end-point cases $s=1/2$ and $s=3/2$ of 
$\gamma_D\in\cB\bigl(H^{s}(\Om),H^{s-(1/2)}(\dOm)\bigr)$ fail, we nonetheless
have
\begin{eqnarray}\label{A.62x}
\ga_D\in \cB\big(H^{(3/2)+\eps}(\Om), H^{1}(\dOm)\big), \quad \eps>0.
\end{eqnarray}
See \cite[Lemma\ A.4]{GM08} for a proof. Below we augment this with the following
result: 

%%%%%%%%%%%%%%%%%%%%
\begin{lemma} [\cite{GM08}] \label{Gam-L1}
Assume Hypothesis \ref{h2.1}. Then for each $s>-3/2$, the restriction 
to the boundary operator \eqref{2.5} extends to a linear operator 
\begin{eqnarray}\label{Mam-1}
\gamma_D:\bigl\{u\in H^{1/2}(\Omega)\,\big|\,\Delta u\in H^{s}(\Omega)\bigr\}
\to L^2(\partial\Omega;d^{n-1}\omega),
\end{eqnarray}
is compatible with \eqref{2.6}, and is bounded when 
$\{u\in H^{1/2}(\Omega)\,|\,\Delta u\in H^{s}(\Omega)\bigr\}$ is equipped
with the natural graph norm $u\mapsto \|u\|_{H^{1/2}(\Omega)}
+\|\Delta u\|_{H^{s}(\Omega)}$. In addition, this operator has a
linear, bounded right-inverse $($hence, in particular, it is onto$)$. 

Furthermore, for each $s>-3/2$, the restriction 
to the boundary operator \eqref{2.5} also extends to a linear operator 
\begin{eqnarray}\label{Mam-2}
\gamma_D:\bigl\{u\in H^{3/2}(\Omega)\,\big|\,\Delta u\in H^{1+s}(\Omega)\bigr\}
\to H^1(\partial\Omega), 
\end{eqnarray}
which is compatible with \eqref{2.6}, and is bounded when the set 
$\{u\in H^{3/2}(\Omega)\,|\,\Delta u\in H^{1+s}(\Omega)\bigr\}$ is equipped
with the natural graph norm $u\mapsto \|u\|_{H^{3/2}(\Omega)}
+\|\Delta u\|_{H^{1+s}(\Omega)}$. Once again, this operator has a
linear, bounded right-inverse $($hence, in particular, it is onto$)$. 
\end{lemma}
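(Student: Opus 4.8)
The plan is to confine all the difficulty coming from the low boundary regularity of $\Om$ to the boundary behavior of \emph{harmonic} functions, and to reduce the general case to it by subtracting off a particular solution of the relevant Poisson problem. I treat the assertion concerning \eqref{Mam-1} in detail; the one concerning \eqref{Mam-2} is entirely parallel. So let $u\in H^{1/2}(\Om)$ with $\Delta u=f\in H^{s}(\Om)$, $s>-3/2$. First, using an extension operator for the bounded Lipschitz domain $\Om$, extend $f$ to a compactly supported $F\in H^{s}(\bbR^n)$ with $F|_\Om=f$, and let $w$ be the Newtonian potential of $F$ (suitably cut off), so that $w\in H^{s+2}(\bbR^n)$, $\Delta w=f$ in $\Om$, and $\|w\|_{H^{s+2}(\bbR^n)}\lesssim\|f\|_{H^{s}(\Om)}$; note $s+2>1/2$ because $s>-3/2$. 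Restricting to $\Om$, the Sobolev embedding $H^{s+2}(\Om)\hookrightarrow H^{\sigma}(\Om)$ for a suitable $\sigma\in(1/2,3/2]$ together with the trace theorem \eqref{2.6} gives $\ga_D w\in\LdOm$ with $\|\ga_D w\|_{\LdOm}\lesssim\|f\|_{H^{s}(\Om)}$. Finally, $h:=u-w$ lies in $H^{1/2}(\Om)$ (again since $H^{s+2}(\Om)\hookrightarrow H^{1/2}(\Om)$) and is harmonic in $\Om$.

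Next, invoke the sharp $L^2$-solvability theory for the Dirichlet Laplacian on a bounded Lipschitz domain (Dahlberg; cf.\ also \cite{GM08}, \cite{GM09}): the Dirichlet trace restricts to a bounded linear operator from $\{h\in H^{1/2}(\Om)\,|\,\Delta h=0\}$ onto $\LdOm$, which is in fact a topological isomorphism with inverse the Dirichlet solution operator $P_D$; in particular $\|\ga_D h\|_{\LdOm}\approx\|h\|_{H^{1/2}(\Om)}$ on harmonic functions. Applying this to the $h$ above, set $\ga_D u:=\ga_D w+\ga_D h\in\LdOm$; combining the two bounds yields $\|\ga_D u\|_{\LdOm}\lesssim\|u\|_{H^{1/2}(\Om)}+\|\Delta u\|_{H^{s}(\Om)}$, i.e., boundedness with respect to the stated graph norm. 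Independence of this value from the chosen particular solution $w$, and compatibility with \eqref{2.6} (hence with the restriction operator \eqref{2.5} on $C(\ol{\Om})$), come down to the same fact: two admissible particular solutions differ by a harmonic function lying in some $H^{\sigma}(\Om)$, $\sigma>1/2$, on which the trace in the sense of \eqref{2.6} and the harmonic trace $(P_D)^{-1}$ coincide — both are inverted by $P_D$ across the Sobolev scale (equivalently, this follows by density of harmonic functions smooth up to $\ol{\Om}$).

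For surjectivity and the bounded right-inverse: given $g\in\LdOm$, the harmonic extension $P_D g$ belongs to $H^{1/2}(\Om)$, satisfies $\Delta(P_D g)=0\in H^{s}(\Om)$ and $\ga_D(P_D g)=g$, with $\|P_D g\|_{H^{1/2}(\Om)}\lesssim\|g\|_{\LdOm}$; hence $g\mapsto P_D g$ is the desired bounded linear right-inverse of $\ga_D$ (taking values in the subspace of harmonic functions, so a fortiori $\ga_D$ is onto). The statement about \eqref{Mam-2} is obtained by running the same three moves with $s$ replaced by $1+s$ (so $\Delta u\in H^{1+s}(\Om)$ with $1+s>-1/2$): the Newtonian-potential term now lies in $H^{3+s}(\Om)$ with $3+s=\tfrac32+\eps$, $\eps:=s+\tfrac32>0$ \emph{fixed}, so $\ga_D w\in H^1(\dOm)$ by \eqref{A.62x}; and the harmonic part is handled by the analogous isomorphism between $\{h\in H^{3/2}(\Om)\,|\,\Delta h=0\}$ and $H^1(\dOm)$ furnished by the $H^1$-Dirichlet (``regularity'') problem on bounded Lipschitz domains (Jerison--Kenig; cf.\ \cite{GM08}), whose inverse supplies the bounded right-inverse into $H^{3/2}(\Om)\cap\{h\,|\,\Delta h=0\}$.

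\textbf{Main obstacle.}
The elementary reductions above are routine once the harmonic-function input in the middle step is granted, and that input is the genuinely hard part: the sharp $L^2$-Dirichlet solvability (Dahlberg) and $H^1$-regularity solvability (Jerison--Kenig) for the Laplacian on an \emph{arbitrary} bounded Lipschitz domain, together with the identification of the respective solution classes with the harmonic subspaces of $H^{1/2}(\Om)$ and $H^{3/2}(\Om)$. In a self-contained treatment these deep facts — not the Poisson-potential bookkeeping — constitute the bulk of the proof; here they are taken from \cite{GM08} (see also \cite{GM09}).
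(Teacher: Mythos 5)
The paper itself gives no proof of this lemma---it is cited from \cite{GM08}---so the closest in-text comparison is Remark~\ref{r3.2}, which carries out the same kind of argument for the Helmholtz Dirichlet problem on unbounded Lipschitz domains with compact boundary. Your proof is correct and runs on the same rails: decompose $u=w+h$, where $w$ is a particular solution of the Poisson equation obtained by convolution with a fundamental solution (you use the Newtonian kernel; Remark~\ref{r3.2} uses $E_n(z_0;\cdot)$ with $z_0\notin\bbR$), whose trace is controlled by the ordinary Sobolev trace \eqref{2.6}/\eqref{A.62x} since $s+2>1/2$, and then dispatch the residual harmonic function $h$ via the endpoint Dirichlet-solvability theory for Lipschitz domains. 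The organizational difference is that Remark~\ref{r3.2} (and, as far as one can tell, \cite{GM08}) represent the harmonic/Helmholtz part explicitly via the single-layer potential $\cS_{z_0}$ and invoke the isomorphism $S_{z_0}\in\cB\bigl(L^2(\dOm;d^{n-1}\sigma),H^1(\dOm)\bigr)$, whereas you cite directly the isomorphisms $\{h\in H^{1/2}(\Om):\Delta h=0\}\cong\LdOm$ (Dahlberg) and $\{h\in H^{3/2}(\Om):\Delta h=0\}\cong H^1(\dOm)$ (Jerison--Kenig--Verchota) as black boxes. These are two packagings of the same underlying harmonic analysis, and you correctly identify it as the genuinely hard input.

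Two points worth tightening if this were to stand on its own. First, the identification $\{h\in H^{1/2}(\Om):\Delta h=0\}=\{h\ \text{harmonic}:\text{nontangential maximal function of }h\in\LdOm\}$, with comparable norms, is itself a nontrivial theorem (Dahlberg's area-integral estimate), and a reader might otherwise conflate the Sobolev condition $h\in H^{1/2}(\Om)$ with the nontangential-maximal-function formulation in which the solvability theorem is actually proved; your phrase ``sharp $L^2$-solvability theory'' elides this. Second, your compatibility argument (two particular solutions differ by a harmonic function in some $H^\sigma(\Om)$, $\sigma>1/2$, on which the two traces agree) ultimately rests on \emph{uniqueness} of the endpoint Dirichlet problem---again part of Dahlberg's theorem---which is what forces the classical trace \eqref{2.6} and the endpoint harmonic trace to coincide on such $h$; this deserves to be said explicitly rather than gestured at via density. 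Neither is a gap, merely an understatement of what is being used.
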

%%%%%%%%%%%%%%%%%%%%

Next, we introduce the operator $\ga_N$ (the strong Neumann trace) by
\begin{align}\lb{2.7} 
\ga_N = \nu\cdot\ga_D\nabla \colon H^{s+1}(\Om)\to \LdOm, \quad 1/2<s<3/2, 
\end{align}
where $\nu$ denotes the outward pointing normal unit vector to
$\partial\Om$. It follows from \eqref{2.6} that $\ga_N$ is also a
bounded operator. We seek to extend the action of the Neumann trace
operator \eqref{2.7} to other (related) settings. To set the stage, 
assume Hypothesis \ref{h2.1} and recall that the inclusion 
\begin{equation}\lb{inc-1}
\iota:H^s(\Omega)\hookrightarrow \bigl(H^1(\Omega)\bigr)^*,\quad s>-1/2,
\end{equation}
is well-defined and bounded. We then introduce the weak Neumann trace 
operator 
\begin{equation}\lb{2.8}
\wti\ga_N\colon\big\{u\in H^1(\Om)\,\big|\,\Delta u\in H^s(\Om)\big\} 
\to H^{-1/2}(\dOm),\quad s>-1/2,  
\end{equation}
as follows: Given $u\in H^1(\Om)$ with $\Delta u \in H^s(\Om)$ 
for some $s>-1/2$, we set (with $\iota$ as in \eqref{inc-1})
\begin{align} \lb{2.9}
\langle \phi, \wti\ga_N u \rangle_{1/2}
=\int_\Om d^n x\,\ol{\nabla \Phi(x)} \cdot \nabla u(x)  
+ {}_{H^1(\Om)}\langle \Phi, \iota(\Delta u)\rangle_{(H^1(\Om))^*}, 
\end{align}
for all $\phi\in H^{1/2}(\dOm)$ and $\Phi\in H^1(\Om)$ such that
$\ga_D\Phi = \phi$. We note that this definition is
independent of the particular extension $\Phi$ of $\phi$, and that
$\wti\ga_N$ is a bounded extension of the Neumann trace operator
$\ga_N$ defined in \eqref{2.7}.

The end-point case $s=1/2$ of \eqref{2.7} is discussed separately below. 

%%%%%%%%%%%%%%%%%%%%%%%%%%
\begin{lemma} [\cite{GM08}] \label{Neu-tr}
Assume Hypothesis \ref{h2.1}. 
Then the Neumann trace operator \eqref{2.7} also extends to
\begin{eqnarray}\label{MaX-1}
\wti\gamma_N:\bigl\{u\in H^{3/2}(\Omega)\,\big|\,\Delta u\in L^2(\Omega;d^nx)\bigr\}
\to L^2(\partial\Omega;d^{n-1}\omega) 
\end{eqnarray}
in a bounded fashion when the space 
$\{u\in H^{3/2}(\Omega)\,|\,\Delta u\in L^2(\Omega;d^nx)\bigr\}$ is equipped
with the natural graph norm $u\mapsto \|u\|_{H^{3/2}(\Omega)}
+\|\Delta u\|_{L^2(\Omega;d^nx)}$. This extension is compatible 
with \eqref{2.8} and has a linear, bounded, right-inverse $($hence, 
as a consequence, it is onto$)$.  

Moreover, the Neumann trace operator \eqref{2.7} further extends to
\begin{eqnarray}\label{MaX-1U}
\wti\gamma_N:\bigl\{u\in H^{1/2}(\Omega)\,\big|\,\Delta u\in L^2(\Omega;d^nx)\bigr\}
\to H^{-1}(\dOm) 
\end{eqnarray}
in a bounded fashion when the space 
$\{u\in H^{1/2}(\Omega)\,|\,\Delta u\in L^2(\Omega;d^nx)\bigr\}$ is equipped
with the natural graph norm $u\mapsto \|u\|_{H^{1/2}(\Omega)}
+\|\Delta u\|_{L^2(\Omega;d^nx)}$. Once again, this extension is compatible 
with \eqref{2.8} and has a linear, bounded, right-inverse $($thus, in particular,
it is onto$)$.  
\end{lemma}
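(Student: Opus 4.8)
\textbf{Proof proposal for Lemma \ref{Neu-tr}.}

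The plan is to derive both extensions from the weak Neumann trace operator $\wti\ga_N$ in \eqref{2.8}, \eqref{2.9} together with the mapping properties of $\ga_D$ collected in \eqref{2.6}, \eqref{2.6a}, \eqref{A.62x}, and Lemma \ref{Gam-L1}, by a careful bookkeeping of which Sobolev space the various pairings live in. For \eqref{MaX-1}, the point is that if $u\in H^{3/2}(\Om)$ then $\nabla u\in H^{1/2}(\Om)^n$, so the bilinear form $\int_\Om d^nx\,\ol{\nabla\Phi}\cdot\nabla u$ extends from $\Phi\in H^1(\Om)$ to $\Phi\in H^{1/2}(\Om)$; combined with $\Delta u\in L^2(\Om;d^nx)\hookrightarrow (H^{1/2}(\Om))^*$ (by \eqref{inc-1} with the roles shifted, or rather by duality $L^2\hookrightarrow (H^{1/2})^*$), the right-hand side of \eqref{2.9} makes sense for test functions $\Phi$ only in $H^{1/2}(\Om)$. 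By the surjectivity and bounded right-invertibility of $\ga_D\colon H^{1/2}(\Om)\to L^2(\dOm;d^{n-1}\omega)$ — this is exactly the $s=1/2$ endpoint furnished by \eqref{Mam-1} in Lemma \ref{Gam-L1} — one may choose, for each $\phi\in L^2(\dOm;d^{n-1}\omega)$, an extension $\Phi\in H^{1/2}(\Om)$ with $\ga_D\Phi=\phi$ and $\|\Phi\|_{H^{1/2}(\Om)}\le C\|\phi\|_{L^2(\dOm;d^{n-1}\omega)}$. Substituting this into \eqref{2.9} then defines $\wti\ga_N u$ as an element of $(L^2(\dOm;d^{n-1}\omega))^*\cong L^2(\dOm;d^{n-1}\omega)$, with norm controlled by $\|u\|_{H^{3/2}(\Om)}+\|\Delta u\|_{L^2(\Om;d^nx)}$; independence of the particular extension $\Phi$ follows because any two choices differ by an element of $H^{1/2}_0$-type null space, on which the form vanishes by the original definition \eqref{2.9}. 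Compatibility with \eqref{2.8} is immediate since the defining formula is literally the same.

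For the bounded right-inverse of \eqref{MaX-1}, I would solve an auxiliary boundary value problem: given $g\in L^2(\dOm;d^{n-1}\omega)$, produce $u\in H^{3/2}(\Om)$ with $\Delta u\in L^2(\Om;d^nx)$ and $\wti\ga_N u=g$, depending boundedly on $g$. The natural device is to take $u$ to be (a suitable modification of) the solution of $\Delta u=0$ in $\Om$ with Neumann data $g$, i.e. invert the Neumann Laplacian; the relevant $H^{3/2}$ regularity for bounded Lipschitz domains with $L^2$ Neumann data is precisely the harmonic-analysis input that powers the definition of $H^N_{0,\Om}$ in \eqref{2.20}, so it may be quoted from \cite{GM08}. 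Alternatively, one composes a bounded right-inverse $R$ of the surjection $\ga_D\colon H^{3/2}(\Om)\to H^{1-\eps}(\dOm)$ (or of $\wti\ga_N$ on a smoother scale) with a parametrix correction; either way the existence of such a right-inverse is equivalent to the already-quoted surjectivity statements, so I would present it as a corollary of the corresponding result for $H^N_{0,\Om}$ rather than reprove it.

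The second extension \eqref{MaX-1U} is handled analogously but with the dual scale shifted down by one unit: for $u\in H^{1/2}(\Om)$ one has $\nabla u\in H^{-1/2}(\Om)^n$, so the form $\int_\Om\ol{\nabla\Phi}\cdot\nabla u$ now requires $\Phi\in H^{3/2}(\Om)$, and $\Delta u\in L^2(\Om;d^nx)$ pairs against $\Phi\in H^{3/2}(\Om)\hookrightarrow (L^2)^*$ without difficulty; the output $\wti\ga_N u$ is then a bounded functional on the image of $\ga_D\colon H^{3/2}(\Om)\to H^1(\dOm)$, which by \eqref{A.62x} and the surjectivity-with-right-inverse assertion \eqref{Mam-2} of Lemma \ref{Gam-L1} is all of $H^1(\dOm)$, hence $\wti\ga_N u\in (H^1(\dOm))^*=H^{-1}(\dOm)$, with norm $\le C(\|u\|_{H^{1/2}(\Om)}+\|\Delta u\|_{L^2(\Om;d^nx)})$. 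The right-inverse for \eqref{MaX-1U} is obtained by the same auxiliary-problem argument, solving the Neumann problem with $H^{-1}(\dOm)$ data and $H^{1/2}(\Om)$ target regularity, again citing \cite{GM08}. The main obstacle throughout is not the functional-analytic duality shuffling — that is routine — but the endpoint Sobolev regularity: the whole argument hinges on the $s=1/2$ (respectively $s=3/2$) endpoint surjectivity and bounded right-invertibility of the Dirichlet trace on bounded Lipschitz domains, which is false for the naive scale $\ga_D\colon H^s\to H^{s-1/2}$ at those endpoints and is salvaged only on the graph-norm subspaces $\{u\in H^{1/2}:\Delta u\in H^s\}$ via Lemma \ref{Gam-L1}; so the proof is essentially an exercise in feeding the correct version of Lemma \ref{Gam-L1} into the duality pairing \eqref{2.9}, and the real content was already extracted in that lemma and in the construction of $H^D_{0,\Om}$, $H^N_{0,\Om}$ in \cite{GM08}.
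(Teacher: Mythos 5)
The paper itself offers no proof of Lemma \ref{Neu-tr}: the statement is imported verbatim from \cite{GM08} (note the attribution in the lemma header), so there is no in-paper argument to measure yours against; the only question is whether your reconstruction would close. As written, it would not.

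The decisive step in your treatment of \eqref{MaX-1} is the claim that the form $\Phi\mapsto\int_\Omega \ol{\nabla\Phi}\cdot\nabla u\,d^nx$ ``extends from $\Phi\in H^1(\Omega)$ to $\Phi\in H^{1/2}(\Omega)$'' because $\nabla u\in H^{1/2}(\Omega)^n$. On a bounded Lipschitz domain this is exactly where things break: $\nabla\Phi$ then lies in $H^{-1/2}(\Omega)^n$, and the $L^2(\Omega;d^nx)$ pairing does \emph{not} extend to a duality between $H^{-1/2}(\Omega)$ and $H^{1/2}(\Omega)$. The dual of $H^{1/2}(\Omega)$ is the space $\widetilde H^{-1/2}(\Omega)$ of $H^{-1/2}(\mathbb{R}^n)$--distributions supported in $\ol{\Omega}$, and the identification of that dual with $H^{-1/2}(\Omega)$ (which holds for $|s|<1/2$) fails precisely at the endpoint $s=1/2$. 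So the form is not well defined for $\Phi\in H^{1/2}(\Omega)$, and the subsequent step — lifting $\phi\in L^2(\partial\Omega;d^{n-1}\omega)$ to $\Phi\in H^{1/2}(\Omega)$ via Lemma \ref{Gam-L1} and reading off an $L^2$ bound — has nothing to act on. The dualized version of the same obstruction recurs in your sketch for \eqref{MaX-1U}. Your independence-of-extension argument also invokes a cancellation ``by the original definition \eqref{2.9},'' but \eqref{2.9} is stated only for $\Phi\in H^1(\Omega)$, and no density argument in the endpoint graph norm is supplied to transfer it to $\Phi\in H^{1/2}(\Omega)$.

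More fundamentally, the hierarchy in your closing paragraph is inverted: the duality bookkeeping is the wrapper, not the content. The $L^2(\partial\Omega)$ bound for $\wti\gamma_N u$ when $u\in H^{3/2}(\Omega)$ and $\Delta u\in L^2(\Omega;d^nx)$ \emph{is} the theorem, and in \cite{GM08} it rests on splitting off a Newtonian potential of $\Delta u$ (handled by interior elliptic estimates) and treating the remaining harmonic part with the layer-potential and Rellich-identity theory on Lipschitz domains — the same nontangential-maximal-function machinery behind the single-layer operators $\cS_{z_0}$, $S_{z_0}$ that the present paper itself invokes in Remark \ref{r3.2}, and which also powers Lemma \ref{Gam-L1} and Theorem \ref{t3.1}. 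A correct reconstruction has to quote (or reprove) those estimates; Lemma \ref{Gam-L1} together with duality cannot substitute for them. Your right-inverse constructions are sound in outline (solve a Neumann problem with the prescribed data), but they too depend on exactly this regularity theory rather than on the trace lemma alone.
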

%%%%%%%%%%%%%%%%%%%%%%%%%%

Next we describe the Dirichlet Laplacian $H_{0,\Om}^D$ in $L^2(\Om;d^nx)$:  

%%%%%%%%%%%%%%%%%%%%
\begin{theorem} [\cite{GM08}] \lb{t2.5}
Assume Hypothesis \ref{h2.1}. Then the Dirichlet Laplacian, 
$H_{0,\Om}^D$, defined by 
\begin{align}
& H_{0,\Om}^D = -\Delta,   \lb{2.39} \\ 
& \dom(H_{0,\Om}^D) = 
\big\{u\in H^1(\Om)\,\big|\, \Delta u \in L^2(\Om;d^n x); \, 
\gamma_D u =0 \text{ in $H^{1/2}(\dOm)$}\big\}   \no \\
& \hspace*{1.67cm}  = \big\{u\in H_0^1(\Om)\,\big|\, \Delta u \in L^2(\Om;d^n x)\big\},  \no 
\end{align}
is self-adjoint and nonnegative $($in fact, strictly positive since $\Om$ is bounded\,$)$ 
in $L^2(\Om;d^nx)$. Moreover,
\begin{equation}
\dom\big((H_{0,\Om}^D)^{1/2}\big) = H^1_0(\Om).   \lb{2.40}
\end{equation}
\end{theorem}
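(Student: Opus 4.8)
\textbf{Proof Plan for Theorem \ref{t2.5}.}

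The plan is to exhibit $H_{0,\Om}^D$ as the operator associated, via the first representation theorem, with the closed, nonnegative, symmetric sesquilinear form
\[
 a_D^0(u,v)=\int_\Om d^n x\,\ol{\nabla u(x)}\cdot\nabla v(x),\qquad u,v\in H_0^1(\Om),
\]
and then to identify the form domain and the operator domain explicitly. First I would note that $a_D^0$ is densely defined (since $C_0^\infty(\Om)\subset H_0^1(\Om)\subset L^2(\Om;d^nx)$ densely), symmetric, and nonnegative; closedness of $a_D^0$ on $H_0^1(\Om)$ is immediate because the form norm $(\|u\|_{L^2(\Om;d^nx)}^2+a_D^0(u,u))^{1/2}$ is exactly the $H^1(\Om)$-norm and $H_0^1(\Om)$ is by definition a closed subspace of $H^1(\Om)$. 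The first representation theorem then produces a unique self-adjoint, nonnegative operator $A$ with $\dom(A^{1/2})=H_0^1(\Om)$, which gives \eqref{2.40}, and with $A u=-\Delta u$ in the distributional sense for $u\in\dom(A)$. The strict positivity when $\Om$ is bounded follows from the Poincar\'e inequality on bounded Lipschitz domains: there is $C=C(\Om)>0$ with $\|u\|_{L^2(\Om;d^nx)}\le C\|\nabla u\|_{L^2(\Om;d^nx)}$ for all $u\in H_0^1(\Om)$, whence $a_D^0(u,u)\ge C^{-2}\|u\|_{L^2(\Om;d^nx)}^2$, so $0\notin\si(A)$.

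The remaining point is the identification $\dom(A)=\{u\in H^1(\Om)\mid \Delta u\in L^2(\Om;d^n x),\ \ga_D u=0\}$, together with the equality of this set with $\{u\in H_0^1(\Om)\mid\Delta u\in L^2(\Om;d^n x)\}$. The second equality is the cleaner one: on the one hand, $u\in H_0^1(\Om)$ certainly satisfies $u\in H^1(\Om)$ and, since $\ga_D$ is bounded on $H^1(\Om)$ (see \eqref{2.6}) and vanishes on the dense subset $C_0^\infty(\Om)$ of $H_0^1(\Om)$, we get $\ga_D u=0$; conversely, if $u\in H^1(\Om)$ and $\ga_D u=0$ in $H^{1/2}(\dOm)$, then $u\in H_0^1(\Om)$ — this is the standard characterization of $H_0^1$ as the kernel of the Dirichlet trace on a bounded Lipschitz domain, which one may quote from McLean \cite[Theorem 3.40]{Mc00} or reconstruct using the surjectivity \eqref{2.6a} of $\ga_D$ and a partition-of-unity/local-flattening argument. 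Given that equivalence, both descriptions of $\dom(A)$ coincide, so it suffices to prove the first one.

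For $\dom(A)=\{u\in H_0^1(\Om)\mid\Delta u\in L^2(\Om;d^n x)\}$, the inclusion $\subseteq$ is automatic: $\dom(A)\subset\dom(A^{1/2})=H_0^1(\Om)$ and $Au=-\Delta u\in L^2(\Om;d^nx)$. For $\supseteq$, take $u\in H_0^1(\Om)$ with $f:=-\Delta u\in L^2(\Om;d^nx)$; I must show $u\in\dom(A)$ and $Au=f$, i.e.\ that $a_D^0(u,v)=(f,v)_{L^2(\Om;d^nx)}$ for all $v\in H_0^1(\Om)$. For $v\in C_0^\infty(\Om)$ this is just the definition of $-\Delta u$ as a distribution, and it extends to all $v\in H_0^1(\Om)$ by density together with the continuity of both sides in the $H^1(\Om)$-norm — here the pairing $(f,v)_{L^2(\Om;d^nx)}$ is continuous in $v\in L^2(\Om;d^nx)\supseteq H^1(\Om)$, and $a_D^0(u,\cdot)$ is continuous in $v\in H^1(\Om)$. (Equivalently, one observes that the weak Neumann trace language of \eqref{2.8}–\eqref{2.9} makes the boundary term vanish precisely because $v$ has zero Dirichlet trace, so no genuine boundary integration by parts is needed.) I expect the main obstacle to be purely bibliographic bookkeeping rather than conceptual: namely, pinning down the characterization $H_0^1(\Om)=\ker(\ga_D:H^1(\Om)\to H^{1/2}(\dOm))$ on bounded Lipschitz domains at the right level of rigor, since this is where the Lipschitz regularity hypothesis on $\partial\Om$ is genuinely used; everything else is the standard form-theoretic machinery. $\qed$
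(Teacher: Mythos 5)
The paper presents Theorem \ref{t2.5} as a cited result from \cite{GM08} and supplies no proof of its own, so there is no in-paper argument to compare against. Your form-theoretic proof is the standard one (and, to my knowledge, essentially what is done in \cite{GM08}): associate the self-adjoint operator $A$ to the closed nonnegative form $a_D^0$ on $H_0^1(\Om)$, read off $\dom(A^{1/2})=H_0^1(\Om)$ from the representation theorem, obtain strict positivity from the Poincar\'e inequality, and identify $\dom(A)$ by testing against $C_0^\infty(\Om)$ and then extending by density. All of these steps are correct, and you have correctly isolated the one place where the Lipschitz hypothesis does real work, namely the identification $H_0^1(\Om)=\ker\bigl(\ga_D\colon H^1(\Om)\to H^{1/2}(\dOm)\bigr)$, for which the citation to \cite[Theorem\ 3.40]{Mc00} is appropriate. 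One small point worth making explicit in the reverse inclusion $\supseteq$: the distributional identity $\int_\Om \ol{\nabla u}\cdot\nabla v = -\int_\Om \ol{\Delta u}\,v$ for $v\in C_0^\infty(\Om)$ uses only the definition of the distributional Laplacian plus the fact that $u\in H^1(\Om)$ (so $\nabla u$ is a genuine $L^2$ vector field and the left side is the pairing of $\nabla u$ with $\nabla v$); no regularity of $\dOm$ enters there, which is why the density step then closes the argument. The proposal is sound and complete.
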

%%%%%%%%%%%%%%%%%%%%

The case of the Neumann Laplacian $H_{0,\Om}^N$ in $L^2(\Om;d^nx)$ is isolated 
next:

%%%%%%%%%%%%%%%%%%%%
\begin{theorem} [\cite{GM08}] \lb{t2.3}
Assume Hypothesis \ref{h2.1}.\ Then the Neumann Laplacian, 
 $H_{0,\Om}^N$, defined by 
\begin{align}
& H_{0,\Om}^N = -\Delta,    \lb{2.20} \\
& \dom\big(H_{0,\Om}^N\big) = 
\big\{u\in H^1(\Om)\,\big|\, \Delta u \in L^2(\Om;d^nx); \, 
\wti\gamma_N u =0 
\text{ in $H^{-1/2}(\dOm)$}\big\},    \no 
\end{align}
is self-adjoint and nonnegative in $L^2(\Om;d^nx)$. Moreover,
\begin{equation}
\dom\big(|H_{0,\Om}^N|^{1/2}\big) = H^1(\Om).   \lb{2.21}
\end{equation}
\end{theorem}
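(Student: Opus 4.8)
The plan is to realize $H_{0,\Om}^N$ as the self-adjoint operator canonically associated with a nonnegative, closed sesquilinear form, and then to identify the resulting form-theoretic description of its operator domain with the explicit description in \eqref{2.20} by means of the weak Neumann trace and the generalized Green formula \eqref{2.9}. Concretely, I would introduce
\begin{equation*}
Q_N(u,v) = \int_\Om d^n x\, \ol{\nabla u(x)}\cdot\nabla v(x), \qquad \dom(Q_N) = H^1(\Om),
\end{equation*}
and note that $\|u\|_{\LOm}^2 + Q_N(u,u) = \|u\|_{H^1(\Om)}^2$, so $Q_N$ is nonnegative and closed; it is densely defined because $C_0^\infty(\Om)\subset H^1(\Om)$ is dense in $\LOm$. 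The (first and second) representation theorems then produce a unique self-adjoint, nonnegative operator $A$ in $\LOm$ with $\dom(A)\subseteq H^1(\Om)$, with $(Au,v)_{\LOm} = Q_N(u,v)$ for all $u\in\dom(A)$ and $v\in H^1(\Om)$, and with $\dom(A^{1/2}) = \dom(Q_N) = H^1(\Om)$. Since $A\ge 0$ gives $|A|^{1/2} = A^{1/2}$, this already establishes \eqref{2.21}, and the full theorem follows once $A$ is shown to coincide with the operator defined in \eqref{2.20}.

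For that identification I would prove both inclusions. If $u\in\dom(A)$ and $f = Au$, then testing $Q_N(u,v) = (f,v)_{\LOm}$ against $v\in C_0^\infty(\Om)$ shows $-\Delta u = f$ in the sense of distributions, so $\Delta u\in\LOm$ and $u$ lies in the domain of $\wti\ga_N$ in the sense of \eqref{2.8} with $s=0$. Then, for $\phi\in H^{1/2}(\dOm)$ and any $\Phi\in H^1(\Om)$ with $\ga_D\Phi = \phi$, inserting $\Delta u = -f$ into \eqref{2.9}, using that there ${}_{H^1(\Om)}\langle\Phi,\iota(\Delta u)\rangle_{(H^1(\Om))^*} = (\Phi,\Delta u)_{\LOm}$, and invoking the symmetry $Q_N(\Phi,u) = \ol{Q_N(u,\Phi)}$ together with $Q_N(u,\Phi) = (f,\Phi)_{\LOm}$, the right-hand side of \eqref{2.9} collapses to $0$; hence $\wti\ga_N u = 0$ in $H^{-1/2}(\dOm)$ and $Au = -\Delta u$. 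Conversely, if $u\in H^1(\Om)$ satisfies $\Delta u\in\LOm$ and $\wti\ga_N u = 0$, then \eqref{2.9}, applied with $\phi = \ga_D\Phi$ for $\Phi$ ranging over all of $H^1(\Om)$, reduces to $Q_N(\Phi,u) = -(\Phi,\Delta u)_{\LOm}$ for every $\Phi\in H^1(\Om)$; taking complex conjugates yields $Q_N(u,\Phi) = (-\Delta u,\Phi)_{\LOm}$ for every $\Phi\in H^1(\Om)$, which, by the representation-theorem characterization of $\dom(A)$, forces $u\in\dom(A)$ with $Au = -\Delta u$. This closes the identification; self-adjointness, nonnegativity, and the relation $H_{0,\Om}^N = -\Delta$ then follow directly from the form construction. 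The argument parallels the Dirichlet case in Theorem \ref{t2.5}, with $H^1(\Om)$ and $\wti\ga_N$ in the roles of $H_0^1(\Om)$ and $\ga_D$.

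I do not expect a serious obstacle inside this framework: the only genuinely delicate ingredient is the behaviour of the weak Neumann trace $\wti\ga_N$ on the graph-norm space $\{u\in H^1(\Om)\,|\,\Delta u\in\LOm\}$ — that the right-hand side of \eqref{2.9} is independent of the chosen extension $\Phi$ of $\phi$, that $\wti\ga_N$ is a bounded extension of the classical Neumann trace \eqref{2.7}, and (for the sharper surjectivity and graph-norm boundedness statements) the content of Lemma \ref{Neu-tr}. These are exactly the facts carrying the analytic weight in the bounded Lipschitz setting, and I would invoke them from \cite{GM08} via \eqref{2.8}, \eqref{2.9}, and Lemma \ref{Neu-tr}; everything else reduces to the routine application of Kato's representation theorems outlined above.
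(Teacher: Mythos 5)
The paper does not prove Theorem \ref{t2.3} itself; it attributes the result to \cite{GM08}, where the argument is indeed the form-theoretic construction you outline: define the nonnegative, densely defined, closed form $Q_N$ on $H^1(\Om)$, invoke Kato's first and second representation theorems, and identify the operator domain with the explicit description in \eqref{2.20} via the weak Neumann trace and the generalized Green formula \eqref{2.9}. Your proposal is correct and follows essentially this approach, with the genuine analytic weight correctly located in the mapping properties of $\wti\ga_N$ on $\{u\in H^1(\Om)\,|\,\Delta u\in L^2(\Om;d^nx)\}$ (cf.\ \eqref{2.8}, \eqref{2.9}, Lemma \ref{Neu-tr}), which you rightly import from \cite{GM08} rather than rederive.
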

%%%%%%%%%%%%%%%%%%%%

Continuing, we discuss certain regularity results for fractional powers of 
the resolvents of the Dirichlet and Neumann Laplacians in Lipschitz
domains. 

%%%%%%%%%%%%%%%%%%%%%%%%%%%%%%%%%%%%%
\begin{lemma} [\cite{GM08}] \lb{l2.6}
Assume Hypothesis \ref{h2.1}. In addition, let $q\in [0,1]$ and 
$z\in\bbC\backslash[0,\infty)$. Then,  
\begin{align}
\big(H_{0,\Om}^D-zI_{\Om}\big)^{-q/2},\, \big(H_{0,\Om}^N-zI_{\Om}\big)^{-q/2}
\in\cB\big(\LOm,H^{q}(\Om)\big).   \lb{2.41i}
\end{align}
\end{lemma}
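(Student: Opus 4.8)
The plan is to establish \eqref{2.41i} by interpolation between the endpoint
cases $q=0$ and $q=1$, exploiting the mapping properties already recorded in
Theorems \ref{t2.5} and \ref{t2.3}. For $q=0$ the claim is trivial, since
$\big(H_{0,\Om}^D-zI_\Om\big)^0 = I_\Om = \big(H_{0,\Om}^N-zI_\Om\big)^0$ and
$L^2(\Om;d^nx) = H^0(\Om)$. For $q=1$ the claim is that
$\big(H_{0,\Om}^D-zI_\Om\big)^{-1/2}, \big(H_{0,\Om}^N-zI_\Om\big)^{-1/2}
\in\cB\big(\LOm,H^1(\Om)\big)$; I would obtain this directly from the form-domain
identities \eqref{2.40} and \eqref{2.21}. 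Indeed, writing $H_0$ for either the
Dirichlet or Neumann Laplacian, the operator $\big(H_0-zI_\Om\big)^{-1/2}$ maps
$\LOm$ \emph{onto} $\dom\big(|H_0|^{1/2}\big)$, and by the closed graph theorem it
is bounded from $\LOm$ into $\dom\big(|H_0|^{1/2}\big)$ equipped with its graph
norm; since $\dom\big((H_{0,\Om}^D)^{1/2}\big) = H^1_0(\Om)\hookrightarrow H^1(\Om)$
by \eqref{2.40} and $\dom\big(|H_{0,\Om}^N|^{1/2}\big) = H^1(\Om)$ by \eqref{2.21},
with the respective graph norms equivalent to the $H^1(\Om)$-norm (closed graph
theorem again, using that $H^1_0(\Om)$ and $H^1(\Om)$ are closed in the ambient
spaces), one concludes $\big(H_0-zI_\Om\big)^{-1/2}\in\cB\big(\LOm,H^1(\Om)\big)$.

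Next I would interpolate. Fix $z\in\bbC\backslash[0,\infty)$ and consider, for
$H_0$ either Laplacian, the analytic family of operators
$\{A_\zeta\}_{\zeta}$ given by $A_\zeta = \big(H_0-zI_\Om\big)^{-\zeta/2}$ for
$\zeta$ in the closed strip $0\le\Re(\zeta)\le 1$. On the line $\Re(\zeta)=0$,
$A_\zeta$ is uniformly bounded on $\LOm$ by the spectral theorem (since
$H_0-zI_\Om$ has spectrum in a sector bounded away from $0$, the powers
$(H_0-zI_\Om)^{-it/2}$ are uniformly bounded), and on the line $\Re(\zeta)=1$ it
is uniformly bounded from $\LOm$ into $H^1(\Om)$ by the $q=1$ case together with
the same uniform boundedness of the imaginary powers. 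By Stein's interpolation
theorem for analytic families of operators, applied to the complex interpolation
scale $\big[\LOm, H^1(\Om)\big]_q = H^q(\Om)$ for $q\in[0,1]$ (a standard fact
for Sobolev spaces on bounded Lipschitz domains, see \cite[App.\ A]{GM08}), one
obtains $A_q = \big(H_0-zI_\Om\big)^{-q/2}\in\cB\big(\LOm,H^q(\Om)\big)$ for every
$q\in[0,1]$. This yields \eqref{2.41i}.

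Alternatively, and perhaps more cleanly, one may avoid Stein interpolation and
argue as follows. The resolvent $\big(H_0-zI_\Om\big)^{-1}$ maps $\LOm$ into
$\dom(H_0)\subset H^1(\Om)$ and, by the closed graph theorem, boundedly into
$H^1(\Om)$; meanwhile it is obviously bounded on $\LOm = H^0(\Om)$. Complex
interpolation of these two mapping properties gives
$\big(H_0-zI_\Om\big)^{-1}\in\cB\big(\LOm, H^\te(\Om)\big)$ for all
$\te\in[0,1]$. Now factor $\big(H_0-zI_\Om\big)^{-q/2} =
\big(H_0-zI_\Om\big)^{-1}\big(H_0-zI_\Om\big)^{1-(q/2)}$ and note that
$\big(H_0-zI_\Om\big)^{1-(q/2)} = \big(H_0-zI_\Om\big)^{(2-q)/2}$ is bounded
from $H^{2-q}(\Om)\cap\dom(H_0^{(2-q)/2})$ into $\LOm$; one then chains mapping
properties to land in $H^q(\Om)$. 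The bookkeeping here is slightly fussier, so I
would present the Stein-interpolation route as the main argument.

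I expect the main obstacle to be the precise identification of the complex
interpolation spaces $\big[\LOm, H^1(\Om)\big]_q$ with $H^q(\Om)$ on a bounded
Lipschitz domain, together with the compatibility of this with the boundary
conditions in the Dirichlet case (where the natural endpoint is $H^1_0(\Om)$
rather than $H^1(\Om)$, so that one really interpolates into the smaller scale
$H^q_0(\Om)$ for $q>1/2$, which still embeds continuously into $H^q(\Om)$). These
are standard but delicate facts about Sobolev spaces on Lipschitz domains, and I
would simply cite \cite[App.\ A]{GM08} for them rather than reprove them;
everything else in the argument is soft operator theory (spectral theorem, closed
graph theorem, Stein interpolation) that goes through without difficulty precisely
because $z\in\bbC\backslash[0,\infty)$ keeps $H_0-zI_\Om$ boundedly invertible and
sectorial.
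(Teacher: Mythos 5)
Your argument is correct, and it is in the same interpolation spirit the paper invokes, but the route and the endpoint inputs differ somewhat. The paper's brief comment after the lemma attributes the result to the inclusions $\dom\big(H_{0,\Om}^D\big), \dom\big(H_{0,\Om}^N\big) \subset H^1(\Om)$ together with \emph{real} interpolation, whereas you anchor the $q=1$ endpoint to the form-domain identities \eqref{2.40}, \eqref{2.21} and then invoke \emph{Stein} interpolation for analytic families. Your choice of endpoint is actually the sharper and more apposite one: the operator-domain inclusion $\dom(H_0)\subset H^1(\Om)$ by itself would only place $\dom\big(H_0^{q/2}\big)=\big[\LOm,\dom(H_0)\big]_{q/2}$ inside $H^{q/2}(\Om)$, a full half-derivative short of the claimed $H^q(\Om)$; what one really needs is $\dom\big(H_0^{1/2}\big)\subset H^1(\Om)$, which is exactly \eqref{2.40}, \eqref{2.21}. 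Three remarks. First, you can avoid Stein's theorem altogether: for nonnegative self-adjoint $H_0$ one has the standard identification $\dom\big(H_0^{q/2}\big)=\big[\LOm,\dom\big(H_0^{1/2}\big)\big]_q$ (complex, or equivalently on this Hilbert scale, real interpolation), and since $(H_0-zI_\Om)^{-q/2}$ maps $\LOm$ isomorphically onto $\dom\big(H_0^{q/2}\big)\hookrightarrow\big[\LOm,H^1(\Om)\big]_q=H^q(\Om)$, this yields \eqref{2.41i} directly; presumably this is what the paper's ``real interpolation methods'' amounts to. Second, if you keep the Stein route, note that the imaginary powers $(H_0-zI_\Om)^{-it/2}$ are \emph{not} uniformly bounded in $t$ -- since $\lambda-z$ sweeps a sector of nonzero opening as $\lambda$ ranges over $[0,\infty)$, one only gets $\|(H_0-zI_\Om)^{-it/2}\|\leq e^{c|t|}$ -- but this is still of admissible growth for Stein's theorem; analyticity of $\zeta\mapsto(H_0-zI_\Om)^{-\zeta/2}f$ should also be checked on a dense set of $f$. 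Third, the ``alternative'' you sketch is broken as written: factoring $(H_0-zI_\Om)^{-q/2}=(H_0-zI_\Om)^{-1}(H_0-zI_\Om)^{1-q/2}$ introduces the unbounded positive power $(H_0-zI_\Om)^{(2-q)/2}$ and the chaining does not close, so you are right to present Stein (or, better, the fractional-domain interpolation identity just mentioned) as the main argument.
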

%%%%%%%%%%%%%%%%%%%%%%%%%%%%%%%%%%%%

The fractional powers in \eqref{2.41i} (and in subsequent analogous cases) 
are defined via the functional calculus implied by the spectral theorem 
for self-adjoint operators. As discussed in \cite[Lemma A.2]{GLMZ05} in a 
similar context, the key ingredients in proving Lemma \ref{l2.6} are the inclusions
\begin{equation}
\dom\big(H_{0,\Om}^D\big) \subset H^1(\Om), \quad
\dom\big(H_{0,\Om}^N\big) \subset H^1(\Om)
\end{equation}
and real interpolation methods.

Moving on, we now consider mapping properties of powers of the 
resolvents of Neumann Laplacians multiplied (to the left) 
by the Dirichlet boundary trace operator:

%%%%%%%%%%%%%%%%%%%%%%%%%%%%%%%%%%%
\begin{lemma} [\cite{GM08}] \lb{l2.7}
Assume Hypothesis \ref{h2.1}. In addition, let $\eps > 0$ and suppose that 
$z\in\bbC\backslash[0,\infty)$. Then,
\begin{align}
\ga_D \big(H_{0,\Om}^N-zI_{\Om}\big)^{-(1+\eps)/4} \in
\cB\big(\LOm,\LdOm\big).  \lb{2.42}
\end{align}
\end{lemma}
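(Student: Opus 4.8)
The plan is to factor the operator in \eqref{2.42} through an intermediate Sobolev space on $\Om$ and then apply the boundary trace estimate \eqref{2.6}. First I would pick an auxiliary exponent: choose $s$ with $1/2<s<3/2$ and $s<(1+\eps)/2$; for instance $s=(1+\eps/2)/2$ works once $\eps$ is small (and for large $\eps$ any $s\in(1/2,1)$ will do, since larger powers of the resolvent are only smoother). With $q=(1+\eps)/2\in(0,1]$ (shrinking $\eps$ if necessary, or composing with a bounded resolvent factor), Lemma \ref{l2.6} gives
\begin{equation}
\bigl(H_{0,\Om}^N-zI_{\Om}\bigr)^{-(1+\eps)/4}\in\cB\bigl(\LOm,H^{(1+\eps)/2}(\Om)\bigr).
\end{equation}
Since $(1+\eps)/2>1/2$, we may further compose with the continuous embedding $H^{(1+\eps)/2}(\Om)\hookrightarrow H^{s}(\Om)$ for a suitable $s\in(1/2,3/2)$ with $s\le(1+\eps)/2$, so in fact $\bigl(H_{0,\Om}^N-zI_{\Om}\bigr)^{-(1+\eps)/4}\in\cB\bigl(\LOm,H^{s}(\Om)\bigr)$.

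Next I would invoke the mapping property \eqref{2.6} of the Dirichlet trace, namely $\ga_D\in\cB\bigl(H^{s}(\Om),H^{s-(1/2)}(\dOm)\bigr)$ for $1/2<s<3/2$, together with the continuous embedding $H^{s-(1/2)}(\dOm)\hookrightarrow\LdOm$ (valid since $s-(1/2)>0$). Composing,
\begin{equation}
\ga_D\bigl(H_{0,\Om}^N-zI_{\Om}\bigr)^{-(1+\eps)/4}
=\ga_D\circ\bigl(H_{0,\Om}^N-zI_{\Om}\bigr)^{-(1+\eps)/4}
\in\cB\bigl(\LOm,\LdOm\bigr),
\end{equation}
which is precisely \eqref{2.42}. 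The argument is entirely a composition of three bounded maps: the smoothing of the resolvent power (Lemma \ref{l2.6}), a Sobolev embedding, and the boundedness of the boundary trace (\eqref{2.6} plus the embedding into $\LdOm$).

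The one point requiring a little care — and the only place that could be called an obstacle — is the bookkeeping at the exponent: Lemma \ref{l2.6} only delivers $H^{q}(\Om)$-regularity for $q\in[0,1]$, so one must check that $(1+\eps)/2$ can be brought into $(1/2,1]$ in a way that still leaves room for the trace estimate, i.e., that the target regularity exceeds $1/2$. For $\eps\in(0,1]$ this is immediate since then $(1+\eps)/2\in(1/2,1]$; for larger $\eps$ one writes $\bigl(H_{0,\Om}^N-zI_{\Om}\bigr)^{-(1+\eps)/4}=\bigl(H_{0,\Om}^N-zI_{\Om}\bigr)^{-(1+\eps)/4+1/2}\bigl(H_{0,\Om}^N-zI_{\Om}\bigr)^{-1/2}$ and absorbs the extra negative power as a bounded operator on $\LOm$, reducing to the case $\eps=1$. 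I do not expect any genuine difficulty beyond this elementary interpolation-exponent reduction; there is no need to track the $z$-dependence beyond observing that $z\in\bbC\backslash[0,\infty)$ keeps $H_{0,\Om}^N-zI_{\Om}$ boundedly invertible so that all fractional powers are well-defined via the spectral theorem.
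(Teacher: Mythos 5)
Your proof is correct and follows essentially the same route the paper intends: the paper dispatches Lemma \ref{l2.7} with the one-line remark that it ``follows from Lemma \ref{l2.6} and \eqref{2.6},'' and you have correctly unpacked exactly that composition (resolvent smoothing into $H^q(\Om)$ with $q=(1+\eps)/2$, then the Dirichlet trace into $H^{q-1/2}(\dOm)\hookrightarrow\LdOm$), including the necessary reduction to $\eps\le 1$ via a bounded negative resolvent power. The only cosmetic point is that in your factorization for large $\eps$ the bounded factor $(H_{0,\Om}^N-zI_\Om)^{(1-\eps)/4}$ should be applied first (i.e.\ written on the right of $(H_{0,\Om}^N-zI_\Om)^{-1/2}$) so that the trace acts on the $H^1(\Om)$-valued output; since the two fractional powers commute this is merely a reordering, not a gap.
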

%%%%%%%%%%%%%%%%%%%%%%%%%%%%%%%%%%%%

As in \cite[Lemma 6.9]{GLMZ05}, Lemma \ref{l2.7} follows from Lemma \ref{l2.6}
and \eqref{2.6}. We note in passing that \eqref{2.41i} and \eqref{2.42}, extend of course to all $z$ in the resolvent sets of the corresponding operators involved.

Finally, we turn to our assumptions on the (in general, nonlocal) potential $V$ and the 
corresponding definition of Dirichlet and Neumann Schr\"odinger operators 
$H^D_{\Om}$ and $H^N_{\Om}$ in $L^2(\Om; d^n x)$:

%%%%%%%%%%%%%%%%%%%%%%%%%%%%%%%%%%%%%
\begin{hypothesis} \lb{h2.8}
Suppose that $\Om$ satisfies Hypothesis \ref{h2.1} and assume that
$V \in \cB\big(L^2(\Om;d^nx)\big)$.
\end{hypothesis}
%%%%%%%%%%%%%%%%%%%%%%%%%%%%%%%%%%%%%

Assuming Hypothesis \ref{h2.8}, we introduce the perturbed operators
$H_{\Om}^D$ and $H_{\Om}^N$ in $\LOm$ by 
\begin{align}
H_{\Om}^D = H_{0,\Om}^D + V, \quad 
\dom\big(H_{\Om}^D\big) = \dom\big(H_{0,\Om}^D\big),   \lb{2.29} \\
H_{\Om}^N = H_{0,\Om}^N + V, \quad 
\dom\big(H_{\Om}^N\big) = \dom\big(H_{0,\Om}^N\big).   \lb{2.30} 
\end{align}
$H_{\Om}^D$ and $H_{\Om}^N$ are self-adjoint in $\LOm$ if and only if 
$V$ is, but self-adjointness will play no role in the remainder of this paper. 

As will be made clear in Remark \ref{r3.2}, it is possible to remove the boundedness assumption on $\Om$ in Hypotheses \ref{h2.1} and \ref{h2.8} and assume that $\dOm$ is compact instead.

%%%%%%%%%%%%%%%%%%%%%%%%%%%%%%%%%%%%%%%
%%%%%%%%%%%%%%%%%%%%%%%%%%%%%%%%%%%%%%%
\section{Dirichlet and Neumann boundary value problems \\
and Dirichlet-to-Neumann maps} \label{s3}
%%%%%%%%%%%%%%%%%%%%%%%%%%%%%%%%%%%%%%%
%%%%%%%%%%%%%%%%%%%%%%%%%%%%%%%%%%%%%%%

In this section we review the Dirichlet and Neumann boundary value problems associated with the Helmholtz differential expression $-\Delta - z$ as well as the corresponding differential expression $-\Delta + V - z$ in the presence of a nonlocal potential $V$, both in connection with the open set $\Omega$. In addition, we provide a discussion of Dirichlet-to-Neumann, $M^D_{0,\Om}$, $M^D_{\Om}$, and Neumann-to-Dirichlet maps, $M^N_{0,\Om}$, $M^N_{\Om}$, in $L^2(\partial\Om; d^{n-1}\sigma)$. 

We start with the Helmholtz Dirichlet and Neumann boundary value problems:

%%%%%%%%%%%%%%%%%%%%%%%%%%%%%%%%%%%%%%%
\begin{theorem} [\cite{GM08}] \lb{t3.1}
Assume Hypothesis \ref{h2.1}. Then for every 
$f \in H^1(\dOm)$ and $z\in\bbC\big\backslash\si\big(H_{0,\Om}^D\big)$ the
following Dirichlet boundary value problem,
\begin{align} \lb{3.1a}
\begin{cases}
(-\Delta - z)u = 0 \text{ on }\, \Om, \quad u \in H^{3/2}(\Om), \\
\ga_D u = f \text{ on }\, \dOm,
\end{cases}
\end{align}
has a unique solution $u=u_0^D$. This solution satisfies 
$\wti\ga_N u_0^D \in \LdOm$ and there exist constants $C_0^D=C_0^D(\Omega,z)>0$ 
such that
\begin{equation}\lb{MM.xxx}
\|\wti \ga_N u_0^D\|_{\LdOm} \leq C_0^D \|f\|_{H^1(\dOm)}
\end{equation}
as well as 
\begin{equation}
\|u_0^D\|_{H^{3/2}(\Omega)} \leq C_0^D \|f\|_{H^1(\partial\Omega)}.  \lb{3.3aa}
\end{equation}
Similarly, for every $g\in\LdOm$ and
$z\in\bbC\backslash\si\big(H_{0,\Om}^N\big)$ the following Neumann
boundary value problem,
\begin{align} \lb{3.2a}
\begin{cases}
(-\Delta - z)u = 0 \text{ on }\,\Om,\quad u \in H^{3/2}(\Om), \\
\wti\ga_N u = g\text{ on }\,\dOm,
\end{cases}
\end{align}
has a unique solution $u=u_0^N$. This solution satisfies 
$\ga_D u^N_0 \in H^1(\dOm)$ 
and there exist constants $C_0^N=C_0^N(\Omega,z)>0$ such that
\begin{equation}
\|\ga_D u_0^N\|_{H^1(\dOm)} + \|\wti \ga_N u_0^N\|_{\LdOm} \leq 
C_0^N \|g\|_{\LdOm} 
\end{equation}
as well as 
\begin{equation}
\|u_0^N\|_{H^{3/2}(\Omega)} \leq C_0^N \|g\|_{\LdOm}.  \lb{3.4aa}
\end{equation}
In addition,  \eqref{3.1a}--\eqref{3.4aa} imply that the following maps are bounded  
\begin{align}
& \big[\wti \ga_N\big(\big(H^D_{0,\Om}-zI_\Om\big)^{-1}\big)^*\big]^* \in  
\cB\big(H^1(\dOm), H^{3/2}(\Om)\big), \quad 
z\in\bbC\big\backslash\si\big(H^D_{0,\Om}\big),   \lb{3.4ba}
\\
& \big[\ga_D \big(\big(H^N_{0,\Om}-zI_\Om\big)^{-1}\big)^*\big]^* \in 
\cB\big(\LdOm, H^{3/2}(\Om)\big), \quad 
z\in\bbC\big\backslash\si\big(H^N_{0,\Om}\big).   \lb{3.4ca}
\end{align}
Finally, the solutions $u_0^D$ and $u_0^N$ are given by the formulas
\begin{align}
u_0^D (z) &= -\big(\wti \ga_N \big(H_{0,\Om}^D-\ol{z}I_\Om\big)^{-1}\big)^*f,
\lb{3.9a}
\\
u_0^N (z) &= \big(\ga_D \big(H_{0,\Om}^N-\ol{z}I_\Om\big)^{-1}\big)^*g. 
\lb{3.10a}
\end{align}
\end{theorem}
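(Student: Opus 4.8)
The plan is to prove, in order, uniqueness of the solutions, then their existence together with the accompanying estimates, then the representation formulas \eqref{3.9a} and \eqref{3.10a}, and finally to read off the operator mapping properties \eqref{3.4ba}, \eqref{3.4ca} from what will have been established. Uniqueness is straightforward: if $u\in H^{3/2}(\Om)$ solves $(-\Delta-z)u=0$ on $\Om$ with $\ga_D u=0$, then $u\in H^1(\Om)$ and $\Delta u=-zu\in L^2(\Om;d^nx)$, so Theorem \ref{t2.5} places $u$ in $\dom\big(H^D_{0,\Om}\big)$ with $\big(H^D_{0,\Om}-zI_\Om\big)u=0$, and $z\notin\si\big(H^D_{0,\Om}\big)$ forces $u=0$; in the Neumann case, Lemma \ref{Neu-tr} shows $\wti\ga_N u\in\LdOm$ is well defined for $u\in H^{3/2}(\Om)$ with $\Delta u\in L^2(\Om;d^nx)$, and then Theorem \ref{t2.3} together with $z\notin\si\big(H^N_{0,\Om}\big)$ gives $u=0$.

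\emph{Existence and bounds.} For the Dirichlet problem I would reduce to homogeneous boundary data. Using the bounded right-inverse furnished by the second part of Lemma \ref{Gam-L1} (with the parameter chosen so that the extension satisfies $\Delta w\in L^2(\Om;d^nx)$), pick $w\in H^{3/2}(\Om)$ with $\ga_D w=f$ and $\|w\|_{H^{3/2}(\Om)}+\|\Delta w\|_{L^2(\Om;d^nx)}\le C\|f\|_{H^1(\dOm)}$, and write $u_0^D=w+\wti v$, where $\wti v$ is the solution of $(-\Delta-z)\wti v=(\Delta+z)w$ on $\Om$ with $\ga_D\wti v=0$; then $\ga_D u_0^D=f$ and $(-\Delta-z)u_0^D=0$ hold automatically, and $u_0^D$ is independent of the chosen extension (via Theorem \ref{t2.5} and uniqueness). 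The crucial point — and the main obstacle — is to obtain $\wti v$ in $H^{3/2}(\Om)$ with $\|\wti v\|_{H^{3/2}(\Om)}\le C\|f\|_{H^1(\dOm)}$: since, for a Lipschitz domain, $\dom\big(H^D_{0,\Om}\big)$ is in general \emph{strictly larger} than $H^{3/2}(\Om)$, one cannot simply set $\wti v=\big(H^D_{0,\Om}-zI_\Om\big)^{-1}(\Delta+z)w$, and the $H^{3/2}(\Om)$ regularity genuinely rests on the harmonic analysis of boundary value problems on Lipschitz domains — concretely, on the mapping properties of the Helmholtz single- and double-layer potentials, i.e., the sharp Jerison--Kenig type regularity, which also underlies Lemmas \ref{Gam-L1} and \ref{Neu-tr} and is developed in \cite{GM08}. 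Granting this, $u_0^D\in H^{3/2}(\Om)$ and \eqref{3.3aa} holds; moreover $\Delta u_0^D=-zu_0^D\in H^{3/2}(\Om)\subset L^2(\Om;d^nx)$, so Lemma \ref{Neu-tr} gives $\wti\ga_N u_0^D\in\LdOm$ together with \eqref{MM.xxx}. The Neumann problem is handled symmetrically: extend $g$ by the bounded right-inverse of $\wti\ga_N$ on $\{u\in H^{3/2}(\Om)\,|\,\Delta u\in L^2(\Om;d^nx)\}$ from Lemma \ref{Neu-tr}, correct by the solution of the homogeneous-Neumann-data Helmholtz equation (whose $H^{3/2}(\Om)$ regularity is again the crux), obtaining \eqref{3.4aa}, and then read off $\ga_D u_0^N\in H^1(\dOm)$ and its bound from the second part of Lemma \ref{Gam-L1} applied to $u_0^N$ (legitimate since $u_0^N\in H^{3/2}(\Om)$ and $\Delta u_0^N=-zu_0^N\in H^{3/2}(\Om)$).

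\emph{Representation formulas and mapping properties.} With $u_0^D\in H^{3/2}(\Om)$ at hand, pair it against an arbitrary $\psi\in L^2(\Om;d^nx)$ and put $\phi:=\big(H^D_{0,\Om}-\ol{z}I_\Om\big)^{-1}\psi=\big(\big(H^D_{0,\Om}-zI_\Om\big)^{-1}\big)^*\psi\in\dom\big(H^D_{0,\Om}\big)$, so that $\ga_D\phi=0$ and $\Delta\phi=-\ol{z}\phi-\psi$. Applying the defining relation \eqref{2.9} of $\wti\ga_N$ once with $(u,\Phi)=(u_0^D,\phi)$ and once with $(u,\Phi)=(\phi,u_0^D)$ and subtracting, the Dirichlet-energy integrals cancel, the boundary term carrying $\ga_D\phi=0$ vanishes, and inserting $\Delta u_0^D=-zu_0^D$ and the formula for $\Delta\phi$ leaves a generalized second Green identity relating $\big(u_0^D,\psi\big)_{L^2(\Om;d^nx)}$ to $\big\langle f,\wti\ga_N\phi\big\rangle_{1/2}$; since $\wti\ga_N\big(H^D_{0,\Om}-\ol{z}I_\Om\big)^{-1}\in\cB\big(L^2(\Om;d^nx),H^{-1/2}(\dOm)\big)$ and $\psi$ is arbitrary, this is precisely \eqref{3.9a}. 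The Neumann formula \eqref{3.10a} follows the same way with $\phi:=\big(H^N_{0,\Om}-\ol{z}I_\Om\big)^{-1}\psi$ and the roles of $\ga_D$ and $\wti\ga_N$ interchanged. Finally, \eqref{3.4ba} is nothing but the assertion that the solution operator $f\mapsto u_0^D(z)=-\big[\wti\ga_N\big(\big(H^D_{0,\Om}-zI_\Om\big)^{-1}\big)^*\big]^*f$ maps $H^1(\dOm)$ boundedly into $H^{3/2}(\Om)$ — which is the estimate \eqref{3.3aa} re-read through the identification \eqref{3.9a} — and \eqref{3.4ca} is the corresponding reformulation on the Neumann side.
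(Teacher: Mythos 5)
The paper does not reprove Theorem \ref{t3.1}; it imports it from \cite{GM08}, and the only glimpse of the underlying mechanism is the sketch in Remark \ref{r3.2}. Measured against that, your overall scaffolding (uniqueness via $z\notin\si(H^{D/N}_{0,\Om})$, Green's identity for \eqref{3.9a}--\eqref{3.10a}, reading off \eqref{3.4ba}--\eqref{3.4ca} from the a priori estimates and the representation formulas) is sound and matches what one would expect. The genuine gap is in the existence/regularity step, and it is worth being precise about it.

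Your decomposition $u_0^D=w+\wti v$, with $w$ a lift of $f$ via Lemma \ref{Gam-L1} and $\wti v=\big(H^D_{0,\Om}-zI_\Om\big)^{-1}(\Delta+z)w$, reduces the problem to showing $\dom\big(H^D_{0,\Om}\big)\subset H^{3/2}(\Om)$ (and the analogous inclusion for $H^N_{0,\Om}$). First, your parenthetical that $\dom\big(H^D_{0,\Om}\big)$ is ``in general strictly larger than $H^{3/2}(\Om)$'' is incorrect: for bounded Lipschitz domains the inclusion $\dom\big(H^D_{0,\Om}\big)\subset H^{3/2}(\Om)$ does hold (and is part of what \cite{GM08} establishes), while $H^{3/2}(\Om)$ is certainly not contained in $\dom\big(H^D_{0,\Om}\big)$; the two spaces are incomparable. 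More substantively, this domain inclusion is itself equivalent (modulo a Newtonian-potential reduction) to the regularity assertion you are trying to prove, so ``granting the Jerison--Kenig type regularity'' in the form of a domain inclusion for the resolvent is, at best, restating the theorem rather than reducing it. The mechanism sketched in Remark \ref{r3.2} (which mirrors \cite{GM08}) is genuinely different: it writes $u=v+w$ with $w=(z-z_0)\bigl(E_n(z_0;\cdot)\ast u\bigr)$ a Newtonian potential of $u$ \emph{itself} (so that $(-\Delta - z_0)(u-w)=0$ and $w\in H^2(\Om)$ automatically, by the two-derivative gain of the fundamental-solution convolution), and $v=\cS_{z_0}\bigl[S_{z_0}^{-1}(f-\gamma_Dw)\bigr]$ a single layer potential; the $H^{3/2}$ regularity is then manufactured concretely from the invertibility $S_{z_0}\in\cB\bigl(L^2(\dOm;d^{n-1}\sigma),H^1(\dOm)\bigr)$ and the mapping property of $\cS_{z_0}$ into $H^{3/2}(\Om)$. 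That is where the boundary harmonic analysis actually enters; your lift-and-correct decomposition never gets to deploy it and instead folds the entire difficulty into a black box. In short: the surrounding structure is fine, but the existence/regularity core needs the layer-potential decomposition (or an equivalent direct use of the $L^2$ regularity and Neumann problems for the Helmholtz operator in Lipschitz domains), and the ``strictly larger'' claim should be removed.
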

%%%%%%%%%%%%%%%%%%%%%%%%%%%%%%%%%%%%%%%

%%%%%%%%%%%%%%%%%%%%%%%%%%%%%%%%%%%%%
\begin{remark} \lb{r3.2}
It is possible to remove the boundedness assumption on $\Om$ in 
Hypotheses \ref{h2.1} and \ref{h2.8} and assume that $\dOm$ is compact instead. 

Consider, for example, the case of the Dirichlet boundary value problem \eqref{3.1a}, 
this time formulated for an unbounded Lipschitz domain 
$\Omega\subset{\mathbb{R}}^n$ 
with a compact boundary. We claim that the same type of well-posedness statement 
as in Theorem~\ref{t3.1} holds in this setting as well. To see this, consider first
the auxiliary problem 
\begin{align} \lb{3.1aX}
\begin{cases}
(-\Delta - z)u = 0 \text{ on }\, \Om, \quad u \in H^{1}(\Om), \\
\ga_D u = f \, \text{ on } \, \dOm,
\end{cases}
\end{align}
which we claim has a unique solution whenever 
$f \in H^{1/2}(\dOm)$ and $z\in\bbC\big\backslash\si\big(H_{0,\Om}^D\big)$. 
In addition, there exists a constant $C_0^D=C_0^D(\Omega,z)>0$  such that the solution 
$u=u_0^D$ of \eqref{3.1aX} satisfies 
\begin{equation}
\|u_0^D\|_{H^{1}(\Omega)} \leq C_0^D \|f\|_{H^{1/2}(\partial\Omega)}.  \lb{3.3aaX}
\end{equation}
To justify this claim, one first observes that there exists a constant $C=C(\Omega)>0$ 
with the property that, given any $f \in H^{1/2}(\dOm)$, it is possible to select a 
function $w\in H^1(\Om)$ such that  
\begin{equation}\lb{MM.1}
\gamma_Dw=f \, \text{ and } \, \|w\|_{H^1(\Om)}\leq C\|f\|_{H^{1/2}(\dOm)}.
\end{equation}
Granted this and having fixed $z\in\bbC\big\backslash\si\big(H_{0,\Om}^D\big)$, 
a solution $u$ for \eqref{3.1aX} can be found in the form 
\begin{equation}\lb{MM.2}
u=u^D_0:=w+ \Big(\widetilde{H_{0,\Om}^D} -zI_{\Om}\Big)^{-1}[(\Delta+z)w], 
\end{equation}
where $\Big(\widetilde{H_{0,\Om}^D} - zI_{\Om}\Big)^{-1}
\in\cB\bigl(H^{-1}(\Om),H^1(\Om)\bigr)$. 
It is then clear that the function $u^D_0$ constructed in \eqref{MM.2} solves 
\eqref{3.1aX}
and satisfies \eqref{3.3aaX}. The uniqueness of such a solution is then a consequence 
of the fact that $z\in\bbC\big\backslash\si\big(H_{0,\Om}^D\big)$. Here 
$\widetilde{H_{0,\Om}^D}$ denotes an extension of the self-adjoint operator 
$H_{0,\Om}^D$  in $L^2(\Om; d^n x)$ familiar from the theory of densely defined, closed  sesquilinear forms bounded from below and their associated self-adjoint operators as discussed in detail in \cite{GM08} (cf.\ App.\ B, in particular, (B.11)--(B.19)). 

Having settled the issue of the well-posedness of \eqref{3.1aX}, we now proceed to 
show that, in the case of an unbounded Lipschitz domain with a compact boundary, 
one has the regularity statement 
\begin{equation}\lb{MM.3}
f\in H^1(\dOm)\hookrightarrow H^{1/2}(\dOm) \, \text{ implies } \, u^D_0\in H^{3/2}(\Om).
\end{equation}
To see this, in addition to $z\in\bbC\big\backslash\si\big(H_{0,\Om}^D\big)$, 
pick a complex number $z_0\in\bbC\backslash\bbR$. Then for every $f\in H^1(\dOm)$ 
we know that $u=u^D_0$ belongs to $H^1(\Om)$ and our goal is to show that, in fact, 
$u=u^D_0\in H^{3/2}(\Om)$. This is done using a suitable representation for $u$, 
namely
\begin{equation}\lb{MM.4}
u=v+w
\end{equation}
where we have set
\begin{equation}\lb{MM.5}
v:=\cS_{z_0}\big[S_{z_0}^{-1}(f-\gamma_Dw)\big],\quad 
w:=(z-z_0)\bigl(E_n (z_0;\cdot)\ast u). 
\end{equation}
Here $E_n(z_0;x)$ is the fundamental solution of the Helmholtz differential expression 
$(-\Delta -z_0)$ in $\bbR^n$, $n\in\bbN$, $n\geq 2$, that is,
\begin{align}
& E_n(z_0;x) = \begin{cases}
(i/4) \big(2\pi |x|/z_0^{1/2}\big)^{(2-n)/2} H^{(1)}_{(n-2)/2} 
\big(z_0^{1/2}|x|\big), & n\geq 2, \; z_0\in\bbC\backslash \{0\}, \\
\f{-1}{2\pi} \ln(|x|), & n=2, \; z_0=0, \\ 
\f{1}{(n-2)\omega_{n-1}}|x|^{2-n}, & n\geq 3, \; z_0=0, 
\end{cases}   \no \\
& \hspace*{6.8cm} \Im\big(z_0^{1/2}\big)\geq 0,\; x\in\bbR^n\backslash\{0\}, 
 \lb{3.18} 
\end{align}
with $H^{(1)}_{\nu}(\dott)$ denoting the Hankel function of the first kind 
with index $\nu\geq 0$ (cf.\ \cite[Sect.\ 9.1]{AS72}), and 
\begin{equation}\lb{MM.6}
(\cS_{z_0})h(x):=\int_{\dOm}d^{n-1}\sigma(y)\,E_n (z_0;(x-y)h(y),  
\quad x\in\Om,
\end{equation}
is the so-called single layer potential operator for 
$(-\Delta-z_0)$ in ${\mathbb{R}}^n$, and finally, 
\begin{equation}
S_{z_0}:=\gamma_D \, \cS_{z_0}.
\end{equation} 
From \cite{GM08} we know that 
$S_{z_0}\in\cB\bigl(L^2(\dOm;d^{n-1}\sigma),H^1(\dOm)\bigr)$ is an isomorphism 
with $S_{z_0}^{-1}\in\cB\bigl(H^1(\dOm),L^2(\dOm;d^{n-1}\sigma)\bigr)$ and, if 
$\psi\in C^\infty_0({\mathbb{R}}^n)$ is identically one in an open neighborhood of 
$\overline{\Omega}$, and $M_\psi$ denotes the operator of multiplication by $\psi$, then 
\begin{equation}
M_\psi \, \cS_{z_0}\in\cB\bigl(L^2(\dOm;d^{n-1}\sigma),H^{3/2}(\Om)\bigr).
\end{equation} 
Furthermore, it has been observed in \cite{GM08} that for any multi-index $\alpha$, the 
function $\partial^\alpha E_{n,z_0}(x)$ decays exponentially at infinity (here, the fact that 
$\Im(z_0)\neq 0$ is used). In turn, this readily yields that $w\in H^2(\Om)$ (hence, in 
particular, $\gamma_Dw\in H^1(\dOm)$), and $v\in H^{3/2}(\Om)$. Consequently, one concludes that $u$ belongs to $H^{3/2}(\Om)$ and its norm in this space is majorized by a fixed multiple of $\|f\|_{H^1(\dOm)}$. 

Having establish the existence of a unique solution $u$ for \eqref{3.1a} in the case when 
$\Omega$ is an unbounded Lipschitz domain with compact boundary, then 
\eqref{MM.xxx} follows from this as in the case of bounded domains. 

The reasoning for the Neumann problem \eqref{3.2a} is very similar, and we omit it. 
\end{remark}
%%%%%%%%%%%%%%%%%%%%%%%%%%%%%%%%%%%%%

By employing a perturbative approach, one extends Theorem \ref{t3.1} in connection with the Helmholtz differential expression $-\Delta - z$ on $\Om$ to the case of a 
Schr\"odinger operator corresponding to $-\Delta + V - z$ on $\Om$, with $V$ a 
generally nonlocal interaction.

%%%%%%%%%%%%%%%%%%%%%%%%%%%%%%%%%%%%%%%
\begin{theorem} \lb{t3.3}
Assume Hypothesis \ref{h2.8}. Then for every 
$f \in H^1(\dOm)$ and $z\in\bbC\big\backslash\si\big(H_{\Om}^D\big)$ the
following Dirichlet boundary value problem,
\begin{align} \lb{3.1}
\begin{cases}
(-\Delta + V - z)u = 0 \text{ on }\, \Om, \quad u \in H^{3/2}(\Om), \\
\ga_D u = f \text{ on }\, \dOm,
\end{cases}
\end{align}
has a unique solution $u=u^D$. This solution satisfies 
$\wti\ga_N u^D \in \LdOm$ and there exist constants $C^D=C^D(\Omega,z)>0$ 
such that
\begin{equation}
\|\wti \ga_N u^D\|_{\LdOm} \leq C^D \|f\|_{H^1(\dOm)}
\end{equation}
as well as 
\begin{equation}
\|u^D\|_{H^{3/2}(\Omega)} \leq C^D \|f\|_{H^1(\partial\Omega)}.  \lb{3.3a}
\end{equation}
Similarly, for every $g\in\LdOm$ and
$z\in\bbC\backslash\si\big(H_{\Om}^N\big)$ the following Neumann
boundary value problem,
\begin{align} \lb{3.2}
\begin{cases}
(-\Delta + V - z)u = 0 \text{ on }\,\Om,\quad u \in H^{3/2}(\Om), \\
\wti\ga_N u = g\text{ on }\,\dOm,
\end{cases}
\end{align}
has a unique solution $u=u^N$. This solution satisfies 
$\ga_D u^N \in H^1(\dOm)$ 
and there exist constants $C^N=C^N(\Omega,z)>0$ such that
\begin{equation}
\|\ga_D u^N\|_{H^1(\dOm)} + \|\wti \ga_N u^N\|_{\LdOm} \leq 
C^N \|g\|_{\LdOm} 
\end{equation}
as well as 
\begin{equation}
\|u^N\|_{H^{3/2}(\Omega)} \leq C^N \|g\|_{\LdOm}.  \lb{3.4a}
\end{equation}
In addition,  \eqref{3.1}--\eqref{3.4a} imply that the following maps are bounded  
\begin{align}
& \big[\wti \ga_N\big(\big(H^D_{\Om}-zI_\Om\big)^{-1}\big)^*\big]^* \in  
\cB\big(H^1(\dOm), H^{3/2}(\Om)\big), \quad 
z\in\bbC\big\backslash\si\big(H^D_{\Om}\big),   \lb{3.4b}
\\
& \big[\ga_D \big(\big(H^N_{\Om}-zI_\Om\big)^{-1}\big)^*\big]^* \in 
\cB\big(\LdOm, H^{3/2}(\Om)\big), \quad 
z\in\bbC\big\backslash\si\big(H^N_{\Om}\big).   \lb{3.4c}
\end{align}
Finally, the solutions $u^D$ and $u^N$ are given by the formulas
\begin{align}
u^D (z) &= -\big(\wti \ga_N \big(H_{\Om}^D-\ol{z}I_\Om\big)^{-1}\big)^*f,
\lb{3.3}
\\
u^N (z) &= \big(\ga_D \big(H_{\Om}^N-\ol{z}I_\Om\big)^{-1}\big)^*g. 
\lb{3.4}
\end{align}
\end{theorem}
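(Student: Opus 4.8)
The plan is to obtain Theorem \ref{t3.3} from the unperturbed Helmholtz result, Theorem \ref{t3.1}, by a resolvent-perturbation argument. First I would observe that since $V\in\cB(L^2(\Om;d^nx))$ and $\dom(H_\Om^D)=\dom(H_{0,\Om}^D)$, for $z\in\bbC\backslash\si(H_\Om^D)$ one has the second resolvent identity
\begin{equation}
\big(H_\Om^D-zI_\Om\big)^{-1}=\big(H_{0,\Om}^D-zI_\Om\big)^{-1}
-\big(H_{0,\Om}^D-zI_\Om\big)^{-1}V\big(H_\Om^D-zI_\Om\big)^{-1}, \no
\end{equation}
valid whenever in addition $z\notin\si(H_{0,\Om}^D)$ (the general case following by analytic continuation, or by noting that $(H_{0,\Om}^D-zI_\Om)^{-1}$ makes sense as an unbounded-to-bounded map on the relevant scale of spaces). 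Taking adjoints of the analogous identity for $\ol z$, one gets
\begin{equation}
\big(\wti\ga_N\big(H_\Om^D-\ol zI_\Om\big)^{-1}\big)^*
=\big(\wti\ga_N\big(H_{0,\Om}^D-\ol zI_\Om\big)^{-1}\big)^*
-\big(H_\Om^D-zI_\Om\big)^{-1}V^*\big(\wti\ga_N\big(H_{0,\Om}^D-\ol zI_\Om\big)^{-1}\big)^*. \no
\end{equation}
Wait—one must be careful that $V$ need not be self-adjoint; the clean way is to work directly with the candidate solution rather than with adjoint identities. So instead I would define $u^D:=-\big(\wti\ga_N(H_\Om^D-\ol zI_\Om)^{-1}\big)^*f$ as in \eqref{3.3} and verify directly that it solves \eqref{3.1}.

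For the verification, the key step is to rewrite $u^D$ in terms of the unperturbed solution $u_0^D=-\big(\wti\ga_N(H_{0,\Om}^D-\ol zI_\Om)^{-1}\big)^*f$ from Theorem \ref{t3.1}. Applying the resolvent identity inside the adjoint (this is where I must take care with the non-self-adjoint $V$: one uses $(H_\Om^D-\ol zI_\Om)^{-1}=(H_{0,\Om}^D-\ol zI_\Om)^{-1}-(H_\Om^D-\ol zI_\Om)^{-1}\ol V(H_{0,\Om}^D-\ol zI_\Om)^{-1}$ where $\ol V$ denotes $V$ with complex-conjugated coefficients, equivalently $\ol V=(V^*)$ on the level of the identity valid for $H^D_{\Om}+V$ versus $H^D_\Om$; taking adjoints brings back $V$), one arrives at a relation of the form
\begin{equation}\lb{MM.pert}
u^D=u_0^D-\big(H_\Om^D-zI_\Om\big)^{-1}V\,u_0^D. \no
\end{equation}
From this formula the claimed properties follow: (a) $u_0^D\in H^{3/2}(\Om)$ by Theorem \ref{t3.1}, and $(H_\Om^D-zI_\Om)^{-1}Vu_0^D\in\dom(H_\Om^D)=\dom(H_{0,\Om}^D)\subset H^1(\Om)$ — but I actually need it in $H^{3/2}(\Om)$, which requires knowing $\dom(H_{0,\Om}^D)$ has the $H^{3/2}$ boundary regularity only implicitly; instead, the cleaner route is: $w:=(H_\Om^D-zI_\Om)^{-1}Vu_0^D$ satisfies $(-\Delta-z)w=Vu_0^D-Vu^D\in L^2(\Om;d^nx)$ and $\ga_D w=0$, so by the regularity built into Theorem \ref{t3.1} applied with right-hand side absorbed (or by \eqref{MM.3}-type reasoning together with $\ga_Dw=0$) one gets $w\in H^{3/2}(\Om)$; hence $u^D\in H^{3/2}(\Om)$. (b) $(-\Delta+V-z)u^D=(-\Delta-z)u_0^D+Vu_0^D-(-\Delta-z)w-Vw-$ a bookkeeping of terms that telescopes to $0$, using $(-\Delta-z)u_0^D=0$. (c) $\ga_D u^D=\ga_D u_0^D-\ga_D w=f-0=f$. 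The norm estimates \eqref{3.3a} follow by combining \eqref{3.3aa} with $\|V\|_{\cB(L^2)}$ and the boundedness of $(H_\Om^D-zI_\Om)^{-1}$, and the Neumann-trace estimate follows from \eqref{MM.xxx} together with the continuity of $\wti\ga_N$ on $\{u\in H^{3/2}(\Om)\mid\Delta u\in L^2(\Om;d^nx)\}$ guaranteed by Lemma \ref{Neu-tr}.

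Uniqueness for \eqref{3.1} reduces, as in Theorem \ref{t3.1}, to showing that a solution $u\in H^{3/2}(\Om)$ of $(-\Delta+V-z)u=0$ with $\ga_D u=0$ must vanish: such $u$ lies in $\dom(H_{0,\Om}^D)=\dom(H_\Om^D)$ and satisfies $(H_\Om^D-zI_\Om)u=0$, so $u=0$ since $z\notin\si(H_\Om^D)$. The bounded-map statements \eqref{3.4b}, \eqref{3.4c} are then immediate from \eqref{3.3}, \eqref{3.4} together with the mapping estimates just proved. The Neumann problem \eqref{3.2} is handled by the same scheme with $H_{0,\Om}^N$, $H_\Om^N$, $\ga_D$ and $\wti\ga_N$ interchanged appropriately, invoking \eqref{3.10a}, \eqref{3.4aa} and Lemma \ref{Neu-tr} in place of their Dirichlet counterparts. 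The main obstacle I anticipate is the bookkeeping around the non-self-adjointness of $V$ when passing the resolvent identity through the adjoint in \eqref{3.3}–\eqref{3.4}, and, secondarily, confirming that the perturbative correction $w$ genuinely lies in $H^{3/2}(\Om)$ rather than merely $H^1(\Om)$ — this is where one must lean on the full strength of Theorem \ref{t3.1} (in particular \eqref{3.3aa}) and of Remark \ref{r3.2}, rather than on the crude inclusion $\dom(H_{0,\Om}^D)\subset H^1(\Om)$.
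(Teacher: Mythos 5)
Your proof takes essentially the same route as the paper: the paper's own (very terse) proof rests on exactly the perturbative decomposition $u^D=u_0^D-\big(H_{\Om}^D-zI_\Om\big)^{-1}Vu_0^D$ (and its Neumann analogue), together with the Helmholtz result of Theorem \ref{t3.1} and the regularity/uniqueness machinery of \cite{GM08}, which is the scheme you lay out. Two small points worth noting: in your regularity argument $(-\Delta-z)w$ actually equals $Vu^D$ (not $Vu_0^D-Vu^D$, which is $Vw$), though either lies in $L^2(\Om;d^nx)$ so the conclusion is unaffected; and, as you yourself anticipate, trying to \emph{derive} the perturbative formula by pushing the resolvent identity through the adjoint in \eqref{3.3} genuinely produces $V^\ast$ rather than $V$ when $V\neq V^\ast$, so the cleaner (and paper's) route is to define $u^D$ by the perturbative formula from the outset and then verify all the claimed properties, including the representation \eqref{3.3}, directly.
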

%%%%%%%%%%%%%%%%%%%%%%%%%%%%%%%%%%%%%%%
\begin{proof}
One can follow the proof of \cite[Theorems\ 3.2 and 3.6]{GM08}, using the fact that 
the functions
\begin{align}
u^D (z) &= u_0^D (z) - \big(H_\Om^D-zI_\Om\big)^{-1} V u_0^D (z), \lb{3.13}
\\
u^N (z)&= u_0^N (z) - \big(H_\Om^N-zI_\Om\big)^{-1} V u_0^N (z),  \lb{3.14}
\end{align}
with $u_0^D, u_0^N$ given by Theorem \ref{t3.1}, satisfy \eqref{3.9a}
and \eqref{3.10a}, respectively. 
\end{proof}
%%%%%%%%%%%%%%%%%%%%%%%%%%%%%%%%%%%%%%%

Assuming Hypothesis \ref{h2.1}, we now introduce the
Dirichlet-to-Neumann map $M_{0,\Om}^{D}(z)$ 
associated with $(-\Delta-z)$ on $\Om$, following \cite{GMZ07}, 
\begin{align}
M_{0,\Om}^{D}(z) \colon
\begin{cases}
H^1(\dOm) \to \LdOm,
\\
\hspace*{10mm} f \mapsto -\wti\ga_N u_0^D,
\end{cases}  \quad z\in\bbC\big\backslash\si\big(H_{0,\Om}^D\big), \lb{3.20}
\end{align}
where $u_0^D$ is the unique solution of
\begin{align}
(-\Delta-z)u_0^D = 0 \,\text{ on }\Om, \quad u_0^D\in
H^{3/2}(\Om), \quad \ga_D u_0^D = f \,\text{ on }\dOm. 
\end{align}
Similarly, assuming Hypothesis \ref{h2.8}, we introduce the
Dirichlet-to-Neumann map $M_\Om^{D}(z)$, 
associated with $(-\Delta+V-z)$ on $\Om$, by 
\begin{align}
M_\Om^{D}(z) \colon
\begin{cases}
H^1(\dOm) \to \LdOm,
\\
\hspace*{10mm} f \mapsto -\wti\ga_N u^D,
\end{cases}  \quad z\in\bbC\big\backslash\si\big(H_{\Om}^D\big), \lb{3.22}
\end{align}
where $u^D$ is the unique solution of
\begin{align}
(-\Delta+V-z)u^D = 0 \,\text{ on }\Om,  \quad u^D \in
H^{3/2}(\Om), \quad \ga_D u^D= f \,\text{ on }\dOm.
\end{align}
By Theorems \ref{t3.1} and \ref{t3.3} one obtains
\begin{equation}
M_{0,\Om}^{D}(z), M_\Om^{D}(z) \in \cB\big(H^1(\partial\Om), \LdOm \big).
\end{equation}

In addition, assuming Hypothesis \ref{h2.1}, we introduce the Neumann-to-Dirichlet map  $M_{0,\Om}^{N}(z)$ associated with $(-\Delta-z)$ on $\Om$, as follows,
\begin{align}
M_{0,\Om}^{N}(z) \colon \begin{cases} \LdOm \to H^1(\dOm),
\\
\hspace*{20.5mm} g \mapsto \ga_D u_0^N, \end{cases}  \quad
z\in\bbC\big\backslash\si\big(H_{0,\Om}^N\big), \lb{3.24}
\end{align}
where $u_0^N$ is the unique solution of
\begin{align}
(-\Delta-z)u_0^N = 0 \,\text{ on }\Om, \quad u_0^N\in
H^{3/2}(\Om), \quad \wti\ga_N u_0^N = g \,\text{ on }\dOm. 
\end{align}
Similarly, assuming Hypothesis \ref{h2.8}, we introduce the Neumann-to-Dirichlet map 
$M_\Om^{N}(z)$ associated with $(-\Delta+V-z)$ on $\Om$ by 
\begin{align}
M_\Om^{N}(z) \colon \begin{cases}
\LdOm \to H^1(\dOm),
\\
\hspace*{20.5mm} g \mapsto \ga_D u^N,
\end{cases}  \quad
z\in\bbC\big\backslash\si\big(H_{\Om}^N\big), \lb{3.26}
\end{align}
where $u^N$ is the unique solution of
\begin{align}
(-\Delta+V-z)u^N = 0 \,\text{ on }\Om,  \quad u^N \in
H^{3/2}(\Om), \quad \wti\ga_N u^N= g \,\text{ on }\dOm.
\end{align}
Again, by Theorems \ref{t3.1} and \ref{t3.3} one obtains
\begin{equation}
M_{0,\Om}^{N}(z), M_\Om^{N}(z) \in \cB\big(\LdOm, H^1(\partial\Om) \big).
\end{equation}
In particular, $M^{N}_{\Om}(z)$, $z\in\bbC\big\backslash\si\big(H_{\Om}^N\big)$, are compact operators in $L^2(\partial\Om; d^{n-1}\sigma)$ since 
$H^1(\partial\Om)$ embeds compactly into $L^2(\partial\Om; d^{n-1}\sigma)$ 
(cf.\ \cite[Proposition\ 2.4]{MM07}).  

Moreover, under the assumption of Hypothesis \ref{h2.1} for $M_{0,\Om}^D(z)$ and 
$M_{0,\Om}^N(z)$, and under the assumption of Hypothesis \ref{h2.8} for 
$M_{\Om}^D(z)$ and $M_{\Om}^N(z)$, one infers the following equalities: 
\begin{align}
M_{0,\Om}^{N}(z) &= - M_{0,\Om}^{D}(z)^{-1}, \quad
z\in\bbC\big\backslash\big(\si\big(H_{0,\Om}^D\big)\cup\si\big(H_{0,\Om}^N\big)\big),
\lb{3.28}
\\
M_{\Om}^{N}(z) &= - M_{\Om}^{D}(z)^{-1}, \quad
z\in\bbC\big\backslash\big(\si\big(H_{\Om}^D\big)\cup\si\big(H_{\Om}^N\big)\big),
\lb{3.29}
\intertext{and} 
M^{D}_{0,\Om}(z) &= \wti\gamma_N\big[\wti \gamma_N
\big(\big(H^D_{0,\Om} - zI_\Om\big)^{-1}\big)^*\big]^*, \quad
z\in\bbC\big\backslash\si\big(H_{0,\Om}^D\big), \lb{3.30}
\\
M^{D}_{\Om}(z) &= \wti\gamma_N\big[\wti \gamma_N \big(\big(H^D_{\Om} -
zI_\Om\big)^{-1}\big)^*\big]^*, \quad
z\in\bbC\big\backslash\si\big(H_{\Om}^D\big),
\lb{3.31}
\\
M^{N}_{0,\Om}(z) &= \gamma_D\big[\gamma_D
\big(\big(H^N_{0,\Om} - zI_\Om\big)^{-1}
\big)^*\big]^*, \quad
z\in\bbC\big\backslash\si\big(H_{0,\Om}^N\big), \lb{3.32}
\\
M^{N}_{\Om}(z) &= \gamma_D\big[\gamma_D
\big(\big(H^N_{\Om} - zI_\Om\big)^{-1}\big)^*\big]^*, \quad
z\in\bbC\big\backslash\si\big(H_{\Om}^N\big).
\lb{3.33}
\end{align}

Next, we note the following auxiliary result, which will play a
crucial role in Theorem \ref{t4.2}, the principal result of this
paper:

%%%%%%%%%%%%%%%%%%%%%%%%%%%%%%%%%%%%%%%
\begin{lemma} \lb{l3.5}
Assume Hypothesis \ref{h2.8}. Then the following
identities hold,
\begin{align}
M_{0,\Om}^D(z) - M_\Om^D(z) &= \wti\gamma_N
\big(H^D_{\Om}-zI_\Om\big)^{-1} V \big[\wti \gamma_N
\big(\big(H^D_{0,\Om}-zI_\Om\big)^{-1}\big)^*\big]^*, \no
\\
&\hspace*{3.1cm}
z\in\bbC\big\backslash\big(\si\big(H_{0,\Om}^D\big)\cup\si\big(H_{\Om}^D\big)\big),
\lb{3.35}
\\
M_\Om^D(z) M_{0,\Om}^D(z)^{-1} &= I_\dOm - \wti\gamma_N
\big(H^D_{\Om}-zI_\Om\big)^{-1} V \big[\gamma_D
\big(\big(H^N_{0,\Om}-zI_\Om\big)^{-1}\big)^*\big]^*, \no
\\
&\hspace*{2.45cm}
z\in\bbC\big\backslash\big(\si\big(H_{0,\Om}^D\big)\cup\si\big(H_{\Om}^D\big)
\cup\si\big(H_{0,\Om}^N\big)\big). \lb{3.36}
\end{align}
\end{lemma}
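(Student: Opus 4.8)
The first identity \eqref{3.35} is a resolvent-type perturbation formula, and the natural route is to combine the representations of the Dirichlet-to-Neumann maps in \eqref{3.30}, \eqref{3.31} with the second resolvent identity. First I would write
\[
\big(H^D_{\Om}-zI_\Om\big)^{-1} = \big(H^D_{0,\Om}-zI_\Om\big)^{-1}
- \big(H^D_{\Om}-zI_\Om\big)^{-1} V \big(H^D_{0,\Om}-zI_\Om\big)^{-1},
\]
valid since $\dom(H^D_\Om)=\dom(H^D_{0,\Om})$ and $V\in\cB(\LOm)$ by Hypothesis \ref{h2.8}, and then apply the double-adjoint-trace sandwich $\wti\gamma_N[\wti\gamma_N(\,\cdot\,)^*]^*$ to both sides. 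The left-hand side becomes $M^D_\Om(z)$ and the first term on the right becomes $M^D_{0,\Om}(z)$; the cross term should produce exactly $-\wti\gamma_N(H^D_\Om-zI_\Om)^{-1}V[\wti\gamma_N((H^D_{0,\Om}-zI_\Om)^{-1})^*]^*$ once one checks that the adjoint of the composition factors as $[\wti\gamma_N(\,\cdot\,)^*]^*$ acting on the rightmost factor and $\wti\gamma_N(H^D_\Om-zI_\Om)^{-1}V$ on the left. The boundedness needed to make these manipulations legitimate (in particular that $[\wti\gamma_N((H^D_{0,\Om}-zI_\Om)^{-1})^*]^*\in\cB(H^1(\dOm),H^{3/2}(\Om))$ so that $\wti\gamma_N$ can be applied to it, and likewise for the $H^D_\Om$ version) is furnished by \eqref{3.4ba}, \eqref{3.4b}, and Lemma \ref{Neu-tr}. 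An alternative, perhaps cleaner, derivation of \eqref{3.35} uses the solution formulas directly: if $u^D$ and $u_0^D$ solve the two Dirichlet problems with the same boundary data $f$, then $w:=u_0^D-u^D$ satisfies $(-\Delta+V-z)w = V u_0^D$ with $\ga_D w = 0$, hence $w = (H^D_\Om - zI_\Om)^{-1}Vu_0^D$; applying $-\wti\gamma_N$ and using $u_0^D = -[\wti\gamma_N((H^D_{0,\Om}-\ol z I_\Om)^{-1})^*]^* f$ from \eqref{3.9a} gives precisely \eqref{3.35} tested against $f$.

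For the second identity \eqref{3.36}, I would start from \eqref{3.35} and multiply on the right by $M^D_{0,\Om}(z)^{-1}$, which exists and equals $-M^N_{0,\Om}(z)$ by \eqref{3.28} on the stated resolvent set (now also excluding $\si(H^N_{0,\Om})$). This yields
\[
M^D_\Om(z) M^D_{0,\Om}(z)^{-1} = I_\dOm - \wti\gamma_N\big(H^D_\Om-zI_\Om\big)^{-1}V
\Big[\wti\gamma_N\big(\big(H^D_{0,\Om}-zI_\Om\big)^{-1}\big)^*\Big]^* M^D_{0,\Om}(z)^{-1},
\]
so the whole point is to identify
\[
\Big[\wti\gamma_N\big(\big(H^D_{0,\Om}-zI_\Om\big)^{-1}\big)^*\Big]^* M^D_{0,\Om}(z)^{-1}
= \Big[\gamma_D\big(\big(H^N_{0,\Om}-\ol z I_\Om\big)^{-1}\big)^*\Big]^*
\]
as operators on $\LdOm$ (up to the conjugation in $z$ that is absorbed by the adjoint bookkeeping). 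This is most transparent at the level of the boundary value problems: given $g\in\LdOm$, set $f:=M^N_{0,\Om}(z)g = \ga_D u_0^N$, where $u_0^N$ solves the Neumann problem $(-\Delta-z)u_0^N=0$, $\wti\gamma_N u_0^N = g$. Then the unique $H^{3/2}$ solution of the Dirichlet problem with that boundary datum $f$ is $u_0^N$ itself, so by \eqref{3.9a} we get $-[\wti\gamma_N((H^D_{0,\Om}-\ol z I_\Om)^{-1})^*]^* f = u_0^N = [\gamma_D((H^N_{0,\Om}-\ol z I_\Om)^{-1})^*]^* g$ by \eqref{3.10a}. Since $M^D_{0,\Om}(z)^{-1} = -M^N_{0,\Om}(z)$, composing gives the desired operator identity, and substituting it back produces \eqref{3.36}. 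One should of course double-check that all of $z\in\bbC\backslash(\si(H^D_{0,\Om})\cup\si(H^D_\Om)\cup\si(H^N_{0,\Om}))$ is needed precisely for: \eqref{3.35} ($\si(H^D_{0,\Om})\cup\si(H^D_\Om)$), invertibility of $M^D_{0,\Om}(z)$ via \eqref{3.28} ($\si(H^N_{0,\Om})$ in addition).

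\textbf{Main obstacle.} The computations themselves are routine second-resolvent-identity manipulations; the delicate point is the bookkeeping of adjoints and complex conjugates $z\leftrightarrow\ol z$, making sure that the double-adjoint traces $[\,\cdot\,]^*$ land on the correct function spaces at each step (this is exactly where \eqref{3.4ba}--\eqref{3.4c}, Lemma \ref{Neu-tr}, Lemma \ref{Gam-L1}, and the mapping properties $M^D_{0,\Om}(z)\in\cB(H^1(\dOm),\LdOm)$, $M^N_{0,\Om}(z)\in\cB(\LdOm,H^1(\dOm))$ are invoked), and that the composition $\wti\gamma_N\circ(H^D_\Om-zI_\Om)^{-1}\circ V$ is well-defined as a map into $\LdOm$ — which again follows from $\dom(H^D_\Om)=\dom(H^D_{0,\Om})\subset H^1(\Om)$, $\Delta$ mapping this domain into $L^2$, and Lemma \ref{Neu-tr}. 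For that reason I would favor the boundary-value-problem derivation sketched above, which sidesteps the need to justify adjoint factorizations of operator products and instead works directly with the unique solutions guaranteed by Theorems \ref{t3.1} and \ref{t3.3}, using only the closed-form solution formulas \eqref{3.9a}, \eqref{3.10a}, \eqref{3.3}, \eqref{3.4}.
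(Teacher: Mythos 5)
Your proof is correct. In fact, the paper itself gives no argument here: it simply says that one can follow the proof of Lemma 3.6 in \cite{GMZ07} step by step, so there is nothing in this paper to compare against directly. Both of your derivations are sound, and I verified the details of each.

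For \eqref{3.35}, your second (boundary-value-problem) route works cleanly: with $w:=u_0^D-u^D$ one has $(-\Delta+V-z)w=Vu_0^D$, $\ga_D w=0$, $w\in H^{3/2}(\Om)$, and $\Delta w\in L^2(\Om;d^nx)$ (using $\Delta u_0^D=-zu_0^D$, $\Delta u^D=(V-z)u^D$, and $V\in\cB(\LOm)$), so indeed $w\in\dom\big(H^D_\Om\big)=\dom\big(H^D_{0,\Om}\big)$ and $w=\big(H^D_\Om-zI_\Om\big)^{-1}Vu_0^D$; applying $-\wti\ga_N$ and substituting $u_0^D$ from \eqref{3.9a} gives \eqref{3.35} exactly, including signs. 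Your resolvent-identity derivation is equivalent: starting from the second resolvent identity, taking the adjoint, left-multiplying by $\wti\ga_N$, taking the adjoint again, and left-multiplying by $\wti\ga_N$ once more yields precisely $M^D_{0,\Om}(z)-M^D_\Om(z)=\wti\ga_N\big(H^D_\Om-zI_\Om\big)^{-1}V\big[\wti\ga_N\big(\big(H^D_{0,\Om}-zI_\Om\big)^{-1}\big)^*\big]^*$, with the needed boundedness supplied by Lemma \ref{Neu-tr} and \eqref{3.4ba}, \eqref{3.4b}. For \eqref{3.36}, the key identification $\big[\wti\ga_N\big(\big(H^D_{0,\Om}-zI_\Om\big)^{-1}\big)^*\big]^*M^D_{0,\Om}(z)^{-1}=\big[\ga_D\big(\big(H^N_{0,\Om}-zI_\Om\big)^{-1}\big)^*\big]^*$ is correctly justified by the observation that $u_0^N$ is itself the unique $H^{3/2}$ solution of the Dirichlet problem with datum $f=\ga_D u_0^N=M^N_{0,\Om}(z)g$, so \eqref{3.9a} and \eqref{3.10a} and $M^D_{0,\Om}(z)^{-1}=-M^N_{0,\Om}(z)$ combine to give it. Given that the cited proof in \cite{GMZ07} proceeds via the resolvent-identity route adapted to the local case, your boundary-value-problem variant is a genuinely cleaner alternative: it sidesteps the careful adjoint-and-conjugation bookkeeping in favor of uniqueness of the boundary value problems, at the modest cost of having to verify domain membership of $w$.
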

%%%%%%%%%%%%%%%%%%%%%%%%%%%%%%%%%%%%%%%

For the proof of Lemma \ref{l3.5} one can follow the corresponding proof of 
Lemma\ 3.6 in \cite{GMZ07} step by step. 

We note that the right-hand sides (and hence the left-hand sides) of \eqref{3.35}
and \eqref{3.36} permit of course an analytic continuation with respect to $z$ 
as long as $z$ varies in the resolvent sets of the corresponding operators involved.

Again we note that due to the reasoning in Remark \ref{r3.2} it is possible to remove the boundedness assumption on $\Om$ in Hypotheses \ref{h2.1} and \ref{h2.8} and assume that $\dOm$ is compact throughout this section.

%%%%%%%%%%%%%%%%%%%%%%%%%%%%%%%%%%%%%%%
%%%%%%%%%%%%%%%%%%%%%%%%%%%%%%%%%%%%%%%
\section{A Multi-Dimensional Variant of a Formula due to Jost and Pais in the 
Presence of Nonlocal Interactions}
\label{s4}
%%%%%%%%%%%%%%%%%%%%%%%%%%%%%%%%%%%%%%%
%%%%%%%%%%%%%%%%%%%%%%%%%%%%%%%%%%%%%%%

In this final section we prove our principal new result, a variant of a multi-dimensional Jost--Pais formula in the presence of nonlocal interactions as discussed in the introduction.

We start by providing an elementary comment on determinants which, however, lies at the heart of the matter of our principal result, Theorem \ref{t4.2}: Suppose $A \in \cB(\cH_1, \cH_2)$, $B \in \cB(\cH_2, \cH_1)$ with $A B \in \cB_1(\cH_2)$ and $B A \in \cB_1(\cH_1)$. Then,
\begin{equation}
\det (I_{\cH_2}-AB) = \det (I_{\cH_1}-BA).   \lb{4.0}
\end{equation}
Equation \eqref{4.0} follows from the fact that all nonzero eigenvalues of $AB$ and $BA$ coincide including their algebraic multiplicities. 

In particular, $\cH_1$ and $\cH_2$ may have different dimensions.
Especially, one of them may be infinite and the other finite, in
which case one of the two determinants in \eqref{4.0} reduces to a
finite determinant. This case indeed occurs in the original
one-dimensional case studied by Jost and Pais \cite{JP51} as
described in detail in \cite{GM03} and the references therein.
In the proof of Theorem \ref{t4.1} below, the role of $\cH_1$ and
$\cH_2$ will be played by $L^2(\Om; d^n x)$ and
$L^2(\partial\Om;d^{n-1} \sigma)$, respectively.

Next, we introduce the appropriate additional trace class assumption on the nonlocal potential $V$:

%%%%%%%%%%%%%%%%%%%%%%%%%%%%%%%%%%%%%
\begin{hypothesis} \lb{h4.0}
Suppose that $\Om$ satisfies Hypothesis \ref{h2.1}  and assume that
$V \in \cB_1\big(L^2(\Om;d^nx)\big)$.
\end{hypothesis}
%%%%%%%%%%%%%%%%%%%%%%%%%%%%%%%%%%%%%

Since $V\in \cB_1\big(L^2(\Om;d^nx)\big)$ we may assume (without loss of generality) that 
\begin{equation}
V = v u, \, \text{ where } \, u, v \in \cB_2\big(L^2(\Om;d^nx)\big)
\end{equation}
and we fix the pair $(u,v)$ associated with $V$ in the following. Thus, one infers 
(for $z\in\bbC\backslash [0,\infty)$)
\begin{align}
& u\big(H_{0,\Om}^D-zI_{\Om}\big)^{-1/2}, \,
\big(H_{0,\Om}^D-zI_{\Om}\big)^{-1/2}v \in\cB_{2}\big(\LOm\big), \lb{2.31} \\
& u\big(H_{0,\Om}^N-zI_{\Om}\big)^{-1/2}, \,
 \big(H_{0,\Om}^N-zI_{\Om}\big)^{-1/2}v \in\cB_{2}\big(\LOm\big), \lb{2.32} \\
& u\big(H_{0,\Om}^D-zI_{\Om}\big)^{-1}v, \,
u\big(H_{0,\Om}^N-zI_{\Om}\big)^{-1}v \in\cB_1\big(\LOm\big),   \lb{2.35}
\end{align}
and hence obtains the resolvent identities (still for $z\in\bbC\backslash [0,\infty)$) 
\begin{align}
& \big(H^D_{\Om}-zI_\Om\big)^{-1} = \big(H^D_{0,\Om}-zI_\Om\big)^{-1}  \no \\
& \quad - \big(H^D_{0,\Om}-zI_\Om\big)^{-1}v
\Big[I_\Om+ u\big(H^D_{0,\Om}-zI_\Om\big)^{-1}v\,\Big]^{-1}
u\big(H^D_{0,\Om}-zI_\Om\big)^{-1},     \lb{3.47a} \\ 
& \big(H^N_{\Om}-zI_\Om\big)^{-1} = \big(H^N_{0,\Om}-zI_\Om\big)^{-1}  \no \\
& \quad - \big(H^N_{0,\Om}-zI_\Om\big)^{-1}v 
\Big[I_\Om+ u\big(H^N_{0,\Om}-zI_\Om\big)^{-1}v \,\Big]^{-1}
u\big(H^N_{0,\Om}-zI_\Om\big)^{-1}.    \lb{3.48a}
\end{align}
We note in passing that \eqref{2.31}--\eqref{3.47a}, \eqref{3.48a} extend 
of course to all $z$ in the resolvent sets of the corresponding operators involved.

We continue by proving an extension of a result in \cite{GLMZ05} to arbitrary space dimensions:

%%%%%%%%%%%%%%%%%%%%%%%%%%%%%%%%%%%%%%%
\begin{theorem} \lb{t4.1}
Assume Hypothesis \ref{h4.0} and
$z\in\bbC\big\backslash\big(\si\big(H_{\Om}^D\big)\cup
\si\big(H_{0,\Om}^D\big) \cup \si\big(H_{0,\Om}^N\big)\big)$. Then,
\begin{align}
\wti \ga_N\big(H_{\Om}^D-zI_{\Om}\big)^{-1}V \Big[\ga_D
\big(H_{0,\Om}^N-\ol{z}I_{\Om}\big)^{-1}\Big]^*
\in\cB_1\big(\LdOm\big)   \lb{4.2}
\end{align}
and
\begin{align}
\begin{split} 
&
\frac{\det{}\Big(I_{\Om}+u\big(H_{0,\Om}^N-zI_{\Om}\big)^{-1}v\,\Big)}
{\det{}\Big(I_{\Om}+u\big(H_{0,\Om}^D-zI_{\Om}\big)^{-1}v\,\Big)}     \\
&\quad = \det{}\Big(I_{\dOm} -
\wti \ga_N\big(H_{\Om}^D-zI_{\Om}\big)^{-1}V \Big[\ga_D
\big(H_{0,\Om}^N-\ol{z}I_{\Om}\big)^{-1}\Big]^* \, \Big).   \lb{4.3}
\end{split} 
\end{align}
\end{theorem}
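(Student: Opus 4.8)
The plan is to establish \eqref{4.2} by a direct operator-composition argument and \eqref{4.3} by combining the resolvent identity \eqref{3.47a}, a Krein-type resolvent formula for the pair $\big(H^D_{0,\Om},H^N_{0,\Om}\big)$, and the elementary determinant identity \eqref{4.0} (which transplants a determinant from $\LOm$ to $\LdOm$). For \eqref{4.2} I would write the operator in question as the composition
\[
\wti\ga_N\big(H_{\Om}^D-zI_{\Om}\big)^{-1}V\big[\ga_D\big(H_{0,\Om}^N-\ol{z}I_{\Om}\big)^{-1}\big]^*
=\big[\wti\ga_N\big(H_{\Om}^D-zI_{\Om}\big)^{-1}\big]\,V\,\big[\ga_D\big(H_{0,\Om}^N-\ol{z}I_{\Om}\big)^{-1}\big]^*,
\]
and note that the two outer factors are bounded while the middle factor $V$ is trace class, so that the product lies in $\cB_1(\LdOm)$. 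The right factor maps $\LdOm$ boundedly into $H^{3/2}(\Om)$, hence into $\LOm$, by \eqref{3.4ca} (using $\big((H^N_{0,\Om}-zI_\Om)^{-1}\big)^*=\big(H^N_{0,\Om}-\ol zI_\Om\big)^{-1}$). For the left factor, $\ran\big((H^D_\Om-zI_\Om)^{-1}\big)=\dom\big(H^D_\Om\big)=\dom\big(H^D_{0,\Om}\big)$, and on bounded Lipschitz domains $\dom\big(H^D_{0,\Om}\big)\subset\{u\in H^{3/2}(\Om)\,|\,\Delta u\in\LOm\}$ (see \cite{GM08}); Lemma~\ref{Neu-tr}, in particular \eqref{MaX-1}, then gives $\wti\ga_N\big(H^D_\Om-zI_\Om\big)^{-1}\in\cB\big(\LOm,\LdOm\big)$. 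Since $\cB\big(\LOm,\LdOm\big)\cdot\cB_1(\LOm)\cdot\cB\big(\LdOm,\LOm\big)\subset\cB_1(\LdOm)$, \eqref{4.2} follows.

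For \eqref{4.3}, abbreviate $C(z)=u\big(H^D_{0,\Om}-zI_\Om\big)^{-1}v$ and $C'(z)=u\big(H^N_{0,\Om}-zI_\Om\big)^{-1}v$, both in $\cB_1(\LOm)$ by \eqref{2.35}; because $z\notin\si\big(H^D_\Om\big)$ one has $\det(I_\Om+C(z))\neq0$, so multiplicativity of the Fredholm determinant gives
\[
\frac{\det\big(I_\Om+C'(z)\big)}{\det\big(I_\Om+C(z)\big)}=\det\big(I_\Om+(I_\Om+C(z))^{-1}(C'(z)-C(z))\big).
\]
Next I would invoke the Krein-type formula
\[
\big(H^N_{0,\Om}-zI_\Om\big)^{-1}-\big(H^D_{0,\Om}-zI_\Om\big)^{-1}
=-\big[\ga_D\big(H^N_{0,\Om}-\ol zI_\Om\big)^{-1}\big]^*\,\wti\ga_N\big(H^D_{0,\Om}-zI_\Om\big)^{-1},
\]
which holds because the left side applied to $f\in\LOm$ is the unique solution of $(-\Delta-z)w=0$ on $\Om$ with Neumann trace $-\wti\ga_N\big(H^D_{0,\Om}-zI_\Om\big)^{-1}f$, while by \eqref{3.10a} the operator $\big[\ga_D\big(H^N_{0,\Om}-\ol zI_\Om\big)^{-1}\big]^*$ sends Neumann data to the associated Helmholtz solution.

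Substituting the Krein-type formula into the previous display and then applying \eqref{4.0} with $\cH_1=\LdOm$, $\cH_2=\LOm$, $A=(I_\Om+C(z))^{-1}u\big[\ga_D\big(H^N_{0,\Om}-\ol zI_\Om\big)^{-1}\big]^*$, $B=\wti\ga_N\big(H^D_{0,\Om}-zI_\Om\big)^{-1}v$ (with $AB$ trace class by \eqref{2.35}, and $BA$ trace class because $u,v\in\cB_2(\LOm)$) transplants the determinant to the boundary:
\[
\frac{\det\big(I_\Om+C'(z)\big)}{\det\big(I_\Om+C(z)\big)}
=\det\Big(I_\dOm-\wti\ga_N\big(H^D_{0,\Om}-zI_\Om\big)^{-1}v\,(I_\Om+C(z))^{-1}u\,\big[\ga_D\big(H^N_{0,\Om}-\ol zI_\Om\big)^{-1}\big]^*\Big).
\]
Finally, rearranging \eqref{3.47a} and evaluating on the dense subspace $\dom\big(H^D_{0,\Om}\big)$ — where $(H^D_{0,\Om}-zI_\Om)^{-1}(H^D_{0,\Om}-zI_\Om)$ is the identity and $H^D_\Om=H^D_{0,\Om}+V$ — yields the operator identity $\big(H^D_{0,\Om}-zI_\Om\big)^{-1}v\,(I_\Om+C(z))^{-1}u=\big(H^D_\Om-zI_\Om\big)^{-1}V$ on $\LOm$ (both sides being bounded); inserting this into the last display gives exactly the right-hand side of \eqref{4.3}.

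The step I expect to be the main obstacle is the innocuous-looking claim used above that $\wti\ga_N\big(H^D_\Om-zI_\Om\big)^{-1}$ maps $\LOm$ into $\LdOm$, rather than merely into $H^{-1/2}(\dOm)$ as the weak Neumann trace \eqref{2.8} would give on $H^1(\Om)$: this relies on the nontrivial $H^{3/2}$-regularity of $\dom\big(H^D_{0,\Om}\big)$ on bounded Lipschitz domains together with the end-point trace statement in Lemma~\ref{Neu-tr}, and it is exactly what allows \eqref{4.2} to hold with the ideal $\cB_1$ rather than only with $\cB_\infty$. A secondary point requiring care is the bookkeeping of the solution operators $\big[\wti\ga_N(\cdot)^{-1}\big]^*$ and $\big[\ga_D(\cdot)^{-1}\big]^*$, together with the interchange of $z$ and $\ol z$ and the signs, when verifying the Krein-type formula and identifying the resulting boundary operator with the one in \eqref{4.3}.
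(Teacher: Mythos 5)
Your argument for \eqref{4.3} is essentially the paper's: reduce the ratio of determinants via multiplicativity, insert the Krein/Nakamura resolvent difference formula for the free Dirichlet and Neumann Laplacians (the paper cites this as Lemma~A.3 of \cite{GMZ07}), transplant the determinant from $\LOm$ to $\LdOm$ using \eqref{4.0}, and absorb $\big[I_\Om+u(H^D_{0,\Om}-zI_\Om)^{-1}v\big]^{-1}$ through the symmetrized resolvent identity \eqref{4.29a}; your version with the inverse on the left of $(C'-C)$ is equivalent to the paper's $[I_\Om-K_D(z)]^{-1}$ on the right by cyclicity. The one place you diverge is \eqref{4.2}: you prove it directly by sandwiching $V\in\cB_1(\LOm)$ between the bounded maps $\wti\ga_N(H^D_\Om-zI_\Om)^{-1}\in\cB(\LOm,\LdOm)$ (via the $H^{3/2}$-regularity of $\dom(H^D_{0,\Om})$ and Lemma~\ref{Neu-tr}) and $\big[\ga_D(H^N_{0,\Om}-\ol zI_\Om)^{-1}\big]^*\in\cB(\LdOm,\LOm)$, whereas the paper obtains the $\cB_1$ membership as a byproduct of the determinant computation through the Hilbert--Schmidt factors $\wti\ga_N(H^D_{0,\Om}-zI_\Om)^{-1}v$ and $\ga_D(H^N_{0,\Om}-\ol zI_\Om)^{-1}u^*$, i.e., a $\cB_2\cdot\cB\cdot\cB_2\subset\cB_1$ chain. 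Both are correct; your route makes \eqref{4.2} stand on its own and avoids appealing to the $V=vu$ factorization for that step, while the paper's keeps the $\cB_2$ bookkeeping it needs for the determinant anyway.
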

%%%%%%%%%%%%%%%%%%%%%%%%%%%%%%%%%%%%%%%
\begin{proof}
From the outset we note that the left-hand side of \eqref{4.3} is
well-defined by \eqref{2.35}. Let
$z\in\bbC\big\backslash\big(\si\big(H_{\Om}^D\big) \cup
\si\big(H_{0,\Om}^D\big) \cup \si\big(H_{0,\Om}^N\big)\big)$.

Next, we introduce
\begin{equation} \lb{4.7}
K_D(z)=- u\big(H_{0,\Om}^D-zI_{\Om}\big)^{-1}v, \quad
K_N(z)=- u\big(H_{0,\Om}^N-zI_{\Om}\big)^{-1}v
\end{equation}
and note that  
\begin{align}
[I_{\Om}-K_D(z)]^{-1} \in\cB\big(\LOm\big), \quad
z\in\bbC\big\backslash\big(\si\big(H_{\Om}^D\big)\cup\si\big(H_{0,\Om}^D\big)\big). 
\lb{4.8}
\end{align}
Hence one concludes that 
\begin{align}
\begin{split}
&\frac{\det\Big(I_{\Om}+ u\big(H_{0,\Om}^N-zI_{\Om}\big)^{-1}v\,\Big)}
{\det\Big(I_{\Om}+u\big(H_{0,\Om}^D-zI_{\Om}\big)^{-1}v\,\Big)}
=
\frac{\det\big(I_{\Om}-K_N(z)\big)}{\det\big(I_{\Om}-K_D(z)\big)}   \\
&\quad =
\det\big(I_{\Om}-(K_N(z)-K_D(z))[I_{\Om}-K_D(z)]^{-1}\big).   \lb{4.12}
\end{split} 
\end{align}

Using \cite[Lemma\ A.3]{GMZ07} (an extension of a
result of Nakamura \cite[Lemma\ 6]{Na01}) and \cite[Remark\ A.5]{GMZ07}, 
one finds
\begin{align}
K_N(z)-K_D(z) &= u\big[\big(H_{0,\Om}^D-zI_{\Om}\big)^{-1}-
\big(H_{0,\Om}^N -zI_{\Om}\big)^{-1}\big]v  \no
\\ &=
u\Big[\ga_D \big(H_{0,\Om}^N-\ol{z}I_{\Om}\big)^{-1}\Big]^* \,
\wti \ga_N \big(H_{0,\Om}^D -zI_{\Om}\big)^{-1}v  \no
\\ &=
\Big[\,\ga_D
\big(H_{0,\Om}^N-\ol{z}I_{\Om}\big)^{-1} u^* \,\Big]^* \,
\wti \ga_N \big(H_{0,\Om}^D -zI_{\Om}\big)^{-1}v.     \lb{4.13}
\end{align}
Insertion of \eqref{4.13} into \eqref{4.12} then yields
\begin{align} \lb{4.14}
&\frac{\det\Big(I_{\Om}+ u\big(H_{0,\Om}^N-zI_{\Om}\big)^{-1}v\,\Big)}
{\det\Big(I_{\Om}+ u\big(H_{0,\Om}^D-zI_{\Om}\big)^{-1}v\,\Big)}
\no
\\
&\quad = \det\Big(I_{\Om} - \Big[\, \ga_D
\big(H_{0,\Om}^N-\ol{z}I_{\Om}\big)^{-1} u^*\,\Big]^* \wti \ga_N
\big(H_{0,\Om}^D-zI_{\Om}\big)^{-1}v   \no \\
& \hspace*{1.6cm} \times  
\Big[I_{\Om}+ u\big(H_{0,\Om}^D-zI_{\Om}\big)^{-1}v \,\Big]^{-1}\Big). 
\end{align}
Since 
\begin{align}
\ga_D \big(H_{0,\Om}^N-\ol{z}I_{\Om}\big)^{-1} u^*
&\in\cB_{2}\big(\LOm,\LdOm\big),
\\
\wti \ga_N\big(H_{0,\Om}^D-zI_{\Om}\big)^{-1}v 
&\in\cB_{2}\big(\LOm,\LdOm\big),
\end{align}
one concludes that 
\begin{align}
\Big[\, \ga_D
\big(H_{0,\Om}^N-\ol{z}I_{\Om}\big)^{-1} u^* \,\Big]^* \wti \ga_N
\big(H_{0,\Om}^D-zI_{\Om}\big)^{-1}v &\in \cB_1\big(\LOm\big),
\\
\wti \ga_N \big(H_{0,\Om}^D-zI_{\Om}\big)^{-1}v \Big[\, \ga_D
\big(H_{0,\Om}^N-\ol{z}I_{\Om}\big)^{-1} u^* \,\Big]^* &\in
\cB_1\big(\LdOm\big).
\end{align}
Then, using \eqref{4.8}, 
one applies the idea expressed in formula \eqref{4.0} and rearranges
the terms in \eqref{4.14} as follows:
\begin{align} \lb{4.20}
&\frac{\det\Big(I_{\Om}+ u\big(H_{0,\Om}^N-zI_{\Om}\big)^{-1}v \,\Big)}
{\det\Big(I_{\Om}+ u\big(H_{0,\Om}^D-zI_{\Om}\big)^{-1}v \,\Big)}   \no 
\\
&\quad = \det\Big(I_{\dOm} - \wti \ga_N
\big(H_{0,\Om}^D-zI_{\Om}\big)^{-1}v 
\Big[I_{\Om}+ u\big(H_{0,\Om}^D-zI_{\Om}\big)^{-1}v \,\Big]^{-1}  \no \\
& \hspace*{1.5cm} \times \Big[\, \ga_D
\big(H_{0,\Om}^N-\ol{z}I_{\Om}\big)^{-1} u^* \,\Big]^*\Big)   \no \\
&\quad = \det\Big(I_{\dOm} - \wti \ga_N
\big(H_{0,\Om}^D-zI_{\Om}\big)^{-1}v 
\Big[I_{\Om}+ u\big(H_{0,\Om}^D-zI_{\Om}\big)^{-1}v \,\Big]^{-1}  \no \\
& \hspace*{1.5cm} \times u \Big[\, \ga_D
\big(H_{0,\Om}^N-\ol{z}I_{\Om}\big)^{-1}\Big]^*\Big). 
\end{align}

Finally, using
\begin{align}
\big(H_{\Om}^D-zI_{\Om}\big)^{-1} v =
\big(H_{0,\Om}^D-zI_{\Om}\big)^{-1} v \Big[I_{\Om}+ 
u\big(H_{0,\Om}^D-zI_{\Om}\big)^{-1} v \,\Big]^{-1},   \lb{4.29a}
\end{align}
proves \eqref{4.3}.
\end{proof}
%%%%%%%%%%%%%%%%%%%%%%%%%%%%%%%%%%%%%%%

Given these preparations, we are ready for the principal result of this paper, the multi-dimensional analog of Theorem \ref{t1.2} in the context of nonlocal interactions:

%%%%%%%%%%%%%%%%%%%%%%%%%%%%%%%%%%%%%%%
\begin{theorem} \lb{t4.2}
Assume Hypothesis \ref{h4.0} and
$z\in\bbC\big\backslash\big(\si\big(H_{\Om}^D\big)\cup
\si\big(H_{0,\Om}^D\big) \cup \si\big(H_{0,\Om}^N\big)\big)$. Then,
\begin{align}
\begin{split}
& M_{\Om}^{D}(z)M_{0,\Om}^{D}(z)^{-1} - I_{\partial\Om}  \\
& \quad = - \wti \ga_N\big(H_{\Om}^D-zI_{\Om}\big)^{-1} V
\Big[\ga_D \big(H_{0,\Om}^N-\ol{z}I_{\Om}\big)^{-1}\Big]^* \in
\cB_1\big(L^2(\partial\Om; d^{n-1}\sigma)\big)
\end{split}
\end{align}
and
\begin{align}
&
\frac{\det\Big(I_{\Om}+ u\big(H_{0,\Om}^N-zI_{\Om}\big)^{-1}v\,\Big)}
{\det\Big(I_{\Om}+ u\big(H_{0,\Om}^D-zI_{\Om}\big)^{-1}v\,\Big)} \no \\
& \quad = \det\Big(I_{\dOm} -
\wti \ga_N\big(H_{\Om}^D-zI_{\Om}\big)^{-1} V
\Big[\ga_D\big(H_{0,\Om}^N-\ol{z}I_{\Om}\big)^{-1}\Big]^* \,\Big)  \lb{4.30}  \\
& \quad = \det\big(M_{\Om}^{D}(z)M_{0,\Om}^{D}(z)^{-1}\big).   \lb{4.31}
\end{align}
\end{theorem}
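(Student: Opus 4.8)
The plan is to combine the boundary reduction already carried out in Theorem~\ref{t4.1} with the Dirichlet-to-Neumann identity~\eqref{3.36} of Lemma~\ref{l3.5}; essentially all of the analytic and trace-class input has been front-loaded into those two results, so what remains is a short assembly. First I would note that the left-hand ratio of Fredholm determinants is well defined by virtue of the trace-class inclusions~\eqref{2.35}, and that~\eqref{4.30} is nothing but a restatement of~\eqref{4.3}; in particular Theorem~\ref{t4.1} also delivers the membership $\wti\ga_N(H_\Om^D-zI_\Om)^{-1}V[\ga_D(H_{0,\Om}^N-\ol z I_\Om)^{-1}]^*\in\cB_1(\LdOm)$ on precisely the $z$-region specified in the hypothesis.

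Next I would invoke~\eqref{3.36}, which identifies $M_\Om^D(z)M_{0,\Om}^D(z)^{-1}$ with $I_\dOm-\wti\ga_N(H_\Om^D-zI_\Om)^{-1}V[\ga_D((H_{0,\Om}^N-zI_\Om)^{-1})^*]^*$, after recording that $((H_{0,\Om}^N-zI_\Om)^{-1})^*=(H_{0,\Om}^N-\ol z I_\Om)^{-1}$ by self-adjointness of $H_{0,\Om}^N$. Since $M_{0,\Om}^D(z)$ is boundedly invertible with $M_{0,\Om}^D(z)^{-1}=-M_{0,\Om}^N(z)\in\cB(\LdOm,H^1(\dOm))$ (cf.~\eqref{3.28}) while $M_\Om^D(z)\in\cB(H^1(\dOm),\LdOm)$, the composition $M_\Om^D(z)M_{0,\Om}^D(z)^{-1}$ is a bounded operator on $\LdOm$; combining this with the trace-class membership from Theorem~\ref{t4.1} and the rearranged form of~\eqref{3.36} yields the first display of the theorem, namely $M_\Om^D(z)M_{0,\Om}^D(z)^{-1}-I_{\partial\Om}=-\wti\ga_N(H_\Om^D-zI_\Om)^{-1}V[\ga_D(H_{0,\Om}^N-\ol z I_\Om)^{-1}]^*\in\cB_1(L^2(\partial\Om;d^{n-1}\sigma))$.

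Finally, substituting~\eqref{3.36} into the right-hand side of~\eqref{4.30} immediately converts that expression into $\det(M_\Om^D(z)M_{0,\Om}^D(z)^{-1})$, which is~\eqref{4.31}, completing the argument. I do not expect a genuine obstacle at this stage; the only matters requiring a little care are bookkeeping ones --- checking that~\eqref{4.3} and~\eqref{3.36} are indeed both valid on the same region $\bbC\backslash(\si(H_\Om^D)\cup\si(H_{0,\Om}^D)\cup\si(H_{0,\Om}^N))$, so that no further analytic continuation in $z$ is needed; identifying $((H_{0,\Om}^N-zI_\Om)^{-1})^*$ with $(H_{0,\Om}^N-\ol z I_\Om)^{-1}$ so that the two trace-class operators literally coincide; and confirming that $M_{0,\Om}^D(z)$ is invertible on this region so that the symbol $M_\Om^D(z)M_{0,\Om}^D(z)^{-1}$, and hence its Fredholm determinant, is meaningful. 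The genuine substance of the result resides in Theorem~\ref{t4.1} (the reduction of the bulk determinant ratio to a boundary object) and in Lemma~\ref{l3.5}; Theorem~\ref{t4.2} is their synthesis.
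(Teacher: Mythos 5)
Your proposal is correct and follows precisely the paper's own (very terse) argument, which is simply to combine Lemma~\ref{l3.5} (equation~\eqref{3.36}) with Theorem~\ref{t4.1}. You have supplied the bookkeeping the paper leaves implicit — verifying that the $z$-domains of~\eqref{3.36} and~\eqref{4.3} coincide, that $\bigl((H_{0,\Om}^N-zI_\Om)^{-1}\bigr)^*=(H_{0,\Om}^N-\ol z I_\Om)^{-1}$ by self-adjointness so the two trace-class operators are literally identical, and that $M_{0,\Om}^D(z)^{-1}=-M_{0,\Om}^N(z)$ makes the composition $M_\Om^D(z)M_{0,\Om}^D(z)^{-1}$ well defined on $\LdOm$ — and all of this is accurate.
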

%%%%%%%%%%%%%%%%%%%%%%%%%%%%%%%%%%%%%%%
\begin{proof}
The result follows from combining Lemma \ref{l3.5} and Theorem \ref{t4.1}.
\end{proof}
%%%%%%%%%%%%%%%%%%%%%%%%%%%%%%%%%%%%%%%

%%%%%%%%%%%%%%%%%%%%%%%%%%%%%%%%%%%%%%%
\begin{remark}  \lb{r4.3}
Assume Hypothesis \ref{h4.0} and
$z\in\bbC\big\backslash\big(\si\big(H_{\Om}^N\big)\cup
\si\big(H_{0,\Om}^D\big) \cup \si\big(H_{0,\Om}^N\big)\big)$. Then,
\begin{align}
\begin{split}
& M_{0,\Om}^{N}(z)^{-1}M_{\Om}^{N}(z) - I_{\partial\Om}     \\ 
& \quad =
\wti \ga_N \big(H_{0,\Om}^D-zI_{\Om}\big)^{-1}V \Big[\ga_D \big(\big(H_{\Om}^N-z
I_{\Om}\big)^{-1}\big)^*\Big]^* \in
\cB_1\big(L^2(\partial\Om; d^{n-1}\sigma)\big)   \lb{4.32}
\end{split} 
\end{align}
and one can also prove the following analog of \eqref{4.30} and \eqref{4.31}:
\begin{align}
&\frac{\det\Big(I_{\Om}+ u\big(H_{0,\Om}^D-zI_{\Om}\big)^{-1}v \,\Big)}
{\det\Big(I_{\Om}+ u\big(H_{0,\Om}^N-zI_{\Om}\big)^{-1}v \,\Big)} \no \\
&\quad = \det\Big(I_{\dOm} +
\wti \ga_N \big(H_{0,\Om}^D-zI_{\Om}\big)^{-1}V \Big[\ga_D \big(\big(H_{\Om}^N-z
I_{\Om}\big)^{-1}\big)^*\Big]^* \,\Big), \lb{4.33} \\
& \quad = \det\big(M_{0,\Om}^{N}(z)^{-1} M_{\Om}^{N}(z)\big).   \lb{4.34}
\end{align}
\end{remark}
%%%%%%%%%%%%%%%%%%%%%%%%%%%%%%%%%%%%%

%%%%%%%%%%%%%%%%%%%%%%%%%%%%%%%%%%%%%%%
\begin{remark}  \lb{r4.4}
$(i)$ For simplicity we focused on trace class nonlocal interactions 
$V\in \cB_1 \big(L^2(\Om; d^n x)\big)$ and Fredholm determinants only. 
Following our use of modified Fredholm determinants $\det_{p}(\cdot)$, 
$p\in\bbN$, in \cite{GMZ07}, one can develop 
all the results presented in this paper under the hypothesis 
$V\in \cB_k \big(L^2(\Om; d^n x)\big)$ for some $k \in \bbN$.  \\
$(ii)$ We closely followed \cite{GMZ07} and used the formalism based on the 
factorization of $V$ into $v u$ and symmetrized resolvent equations, etc. It is 
possible to avoid this factorization replacing the basic operator 
$u\big(H_{0,\Om}^{D,N}-zI_{\Om}\big)^{-1}v$ by 
$V\big(H_{0,\Om}^{D,N}-zI_{\Om}\big)^{-1}$   
(resp., by $\big(H_{0,\Om}^{D,N}-zI_{\Om}\big)^{-1}V$), etc. This applies, in 
particular, to the left-hand sides of \eqref{4.3}, \eqref{4.30}, and \eqref{4.33}.  
Of course, the latter observation also directly follows from identity \eqref{4.0}. \\
$(iii)$ Once more we emphasize that it is possible to remove the boundedness 
assumption on $\Om$ in Hypothesis \ref{h4.0} and assume that $\dOm$ is 
compact instead. 
\end{remark}
%%%%%%%%%%%%%%%%%%%%%%%%%%%%%%%%%%%%%

%%%%%%%%%%%%%%%%%%%%%%%%%%%%%%%%%%%%%%
%%%%%%%%%%%%%%%%%%%%%%%%%%%%%%%%%%%%%%

\end{document}